\newcommand{\ba}{\begin{array}}
\newcommand{\ea}{\end{array}}
\newcommand{\be}{\begin{equation}}
\newcommand{\ee}{\end{equation}}
\newcommand{\ben}{\begin{equation*}}
\newcommand{\een}{\end{equation*}}
\newcommand{\bd}{\begin{equation*}}
\newcommand{\ed}{\end{equation*}}
\newcommand{\bi}{\begin{itemize}}
\newcommand{\ei}{\end{itemize}}
\newcommand{\bn}{\begin{enumerate}}
\newcommand{\en}{\end{enumerate}}
\newcommand{\pa}{\partial}
\newcommand{\f}{\frac}
\newcommand{\ci}{\cite}
\newtheorem{prob}{Problem}
\newtheorem{rem}{Remark}
\title{An energy-based discontinuous Galerkin method for the wave equation with advection}
\author{Lu Zhang\thanks{Dept. of Mathematics, Southern Methodist University, Dallas TX 75275 ({\tt luzhang@smu.edu})} \and
Thomas Hagstrom\thanks{Dept. of Mathematics, Southern Methodist University, Dallas TX 75275 ({\tt thagstrom@smu.edu}).
The first two authors were supported in part by NSF Grant DMS-1418871. Any conclusions or recommendations expressed in
the paper are those of the authors and do not necessarily reflect the views of the NSF.} \and Daniel Appel\"{o}\thanks{Dept. of Applied Mathematics, University of Colorado, Boulder, Boulder CO 80309 ({\tt daniel.appelo@colorado.edu})}} 
\begin{document}

\maketitle

\begin{abstract}
An energy-based discontinuous Galerkin method for the advective wave equation is proposed
and analyzed. Energy-conserving or energy-dissipating methods follow from simple, mesh-independent choices
of the inter-element fluxes, and both subsonic and supersonic advection is allowed. Error estimates in the
energy norm are established, and numerical experiments on structured grids display optimal convergence in
the $L^2$ norm for upwind fluxes. The method generalizes earlier work on energy-based discontinuous
Galerkin methods for second order wave equations which was restricted to energy
forms written as a simple sum of kinetic and potential energy. 
\end{abstract}

\begin{keywords}
discontinuous Galerkin, advective wave equation, upwind method 
\end{keywords}

\begin{AMS}
65M12, 65M60 
\end{AMS}

\pagestyle{myheadings}
\thispagestyle{plain}
\markboth{L. ZHANG, T. HAGSTROM AND D. APPEL\"O}{ENERGY-BASED DG METHOD}

\section{Introduction}
Discontinuous Galerkin (DG) methods are by now well-estab-lished as a method of choice for
solving first order systems in Friedrichs form \cite{HesthavenWarburton08}. In particular, they
are robust, high-order, and geometrically flexible. 
In contrast, analogous
methods for second order hyperbolic equations are less well-developed. Although it is possible to
rewrite second order equations in first order form, there are disadvantages. The first order
systems may require significantly more variables and boundary conditions, and they are only
equivalent to the original forms for constrained data. Moreover, it is typical that
the basic wave equations arising in physical theories are expressed as action principles for
a Lagrangian, leading directly to second order equations, and it is unclear that they can always
be rewritten in Friedrichs form. 

In our view, a good target for a general formulation of DG methods are 
so-called regularly hyperbolic
partial differential equations \cite[Ch. 5]{CAction}, which arise as the Euler-Lagrange
equations associated to a Lagrangian, $L\left(\mathbf{x},t,\mathbf{u},\f {\pa \mathbf{u}}{\pa x_j},
\f {\pa \mathbf{u}}{\pa t} \right)$.
As a first step to generalizing the energy-based DG formulation of \ci{DGwave}, which
applied to a restricted class of Lagrangians of the form $L=\f {1}{2} \arrowvert \f {\pa \mathbf{u}}{\pa t}
\arrowvert^2 - U(\nabla \mathbf{u},\mathbf{u})$, 
we focus in this paper on the scalar wave equation with advection. Now $L$ is given by
\bd
\f {1}{2} \left( \f {\pa u}{\pa t} + \mathbf{w} \cdot \nabla u \right)^2 - \f {c^2}{2}
\arrowvert \nabla u \arrowvert^2,  
\ed
leading to the equation
\be
\left( \f {\pa}{\pa t} + \mathbf{w} \cdot \nabla \right)^2 u = c^2 \Delta u , \label{adwe}
\ee
and an associated energy density 
\bd
\mathcal{E} =  \f {1}{2} \left( \f {\pa u}{\pa t} + \mathbf{w} \cdot \nabla u \right)^2 
+ \f {c^2}{2} \arrowvert \nabla u \arrowvert^2.
\ed 

Besides being a simple example of a second order regularly hyperbolic partial differential
equation which cannot be directly treated by the method proposed in \ci{DGwave}, the
advective wave equation is a physically interesting model of sound propagation in a uniform
flow. Moreover, we believe our methods could be generalized to treat more general
models used in aeroacoustics. Lastly we note that both subsonic and supersonic background
flows are possible, leading to distinct formulations of upwind fluxes. 

We develop here a DG method for (\ref{adwe}) which:
\begin{itemize}
\item Guarantees energy stability based on simply defined upwind,
or central fluxes without mesh-dependent parameters,
\item Does not introduce extra fields beyond the two needed (i.e. $u$
and $\f {\pa u}{\pa t}$).
\end{itemize}
We develop
our formulation in Section \ref{formulation}, derive energy and error estimates in
Section \ref{energyerr}, and display some simple numerical examples in one and two
space dimensions in Section \ref{experiments}. Note that the analysis yields a suboptimal
convergence rate by $1$ for central fluxes and by $1/2$ for upwind fluxes. For problems
in one space dimension we prove optimal estimates in the upwind case, and observe optimal
convergence in $L^2$ for upwind fluxes in experiments on regular meshes. 

A wide variety of other DG methods have been proposed to solve second order wave
equations, but we contend that none of these formulations directly treat (\ref{adwe}) 
or meet our criteria. Local DG \ci{ChouShuXing2014}
and hybridizable DG \ci{GodunovHDG} methods introduce first order spatial derivatives,
already doubling the number of fields in three space dimensions.
Moreover, the method in \ci{GodunovHDG}, while providing
general upwind fluxes, assumes the first order system is in Friedrichs form.
Methods which don't introduce additional spatial derivatives include nonsymmetric and
symmetric interior penalty methods \ci{RiviereWheelerWave,GSSwave,AgutBartDiaz11}.
For these the penalty
parameters need to be mesh-dependent to guarantee stability.

\section{DG formulation}\label{formulation}
As in \ci{DGwave}, we introduce a second scalar variable to produce a system which is first order in time:
\bd
v = \frac{\partial{u}}{\partial{t}}+\mathbf{w}\cdot\nabla u ,
\ed
\begin{eqnarray}\label{first_order_system}
\left\{
\begin{array}{ll}
\frac{\partial u}{\partial t}+\mathbf{w}\cdot\nabla u-v=0,     \\
\frac{\partial{v}}{\partial t}+\mathbf{w}\cdot\nabla v-c^2\Delta u=0.
\end{array}
\right.
\end{eqnarray}
Now the energy form is
\bd
\mathcal{E}(u,v) = \f {1}{2} v^2 + \f {c^2}{2} \arrowvert \nabla u \arrowvert^2 ,
\ed
and we find the change of energy on an element $\Omega_j$ is given by boundary contributions,
\begin{eqnarray}\label{energy}
\frac{d}{dt}\int_{\Omega_j} \mathcal{E}(u,v) =\int_{\partial \Omega_j}c^2v\nabla u\cdot\mathbf{n}-\frac{1}{2}c^2|\nabla u|^2\mathbf{w}\cdot\mathbf{n}-\frac{1}{2}v^2\mathbf{w}\cdot \mathbf{n} ,
\end{eqnarray}
where $\mathbf{n}$ denotes the outward-pointing unit normal.

To discretize we require that the components of the approximations, $(u^h,v^h)$ to $(u,v)$, restricted to $\Omega_j$, be polynomials of degree $q$ and $s$ respectively, that is, elements of $\mathcal{P}^{(q,s,m)} \equiv (\Pi^q)^m \times (\Pi^s)^m$, with $m$ being the dimension. Now we seek approximations to the system
which satisfy a discrete energy identity analogous to (\ref{energy}).  Consider the discrete energy in $\Omega_j$,
\begin{eqnarray}\label{discrete_energy}
E_{j}^{h}(t)=\int_{\Omega_j}\frac{1}{2}(v^h)^2+\frac{1}{2} c^2 |\nabla u^h|^2,
\end{eqnarray}
and its time derivative,
\begin{eqnarray*}
\frac{dE_{j}^h}{dt}=\int_{\Omega_j} v^{h} \frac{\partial v^h}{\partial t}+c^2\nabla u^h \cdot\nabla \frac{\partial u^h}{\partial t}.
\end{eqnarray*}
To develop a weak form which is compatible with the discrete energy, choosing $\phi_u \in (\Pi^q)^{m}$ and $\phi_v \in (\Pi^s)^{m}$, we test the first equation of (\ref{first_order_system}) with $-c^2\Delta \phi_{u}$, the second equation of (\ref{first_order_system}) with $\phi_{v}$, and add flux terms which vanish for the continuous problem. This results in the following equations,

\begin{multline}\label{u_DG}
\int_{\Omega_j}-c^2\Delta\phi_{u}(\frac{\partial u^h}{\partial t}+\mathbf{w}\cdot\nabla u^h-v^h) = \\ \int_{\pa \Omega_j}-c^2\nabla\phi_u\cdot\mathbf{n}(\frac{\partial u^h}{\partial t}+\mathbf{w}\cdot\nabla u^h-v^*)
-c^2\nabla\phi_u\cdot(\nabla u^*-\nabla u^h)\mathbf{w}\cdot\mathbf{n},
\end{multline}

\begin{multline}\label{v_DG}
\int_{\Omega_j}\phi_{v}(\frac{\partial v^h}{\partial t}+\mathbf{w}\cdot\nabla v^h-c^2\Delta u^h)= \\ 
\int_{\pa \Omega_j}c^2 \phi_{v}(\nabla u^*-\nabla u^h)\cdot\mathbf{n} 
- (v^*-v^h)\phi_{v}\mathbf{w}\cdot\mathbf{n}.
\end{multline}
In what follows it is useful to note that an integration by parts in (\ref{u_DG}) and (\ref{v_DG}) yields the alternative form,
\begin{multline}\label{u_DG_use}
\int_{\Omega_j}c^2\nabla \phi_u\cdot\nabla(\frac{\partial u^h}{\partial t}+\mathbf{w}\cdot\nabla u^h-v^h)= \\
\int_{\pa \Omega_j}c^2(v^*-v^h)\nabla\phi_{u}\cdot\mathbf{n}
-c^2\nabla \phi_{u}\cdot(\nabla u^*-\nabla u^h)\mathbf{w}\cdot \mathbf{n},
\end{multline}
\begin{multline}\label{v_DG_use}
\int_{\Omega_j}\phi_{v}\frac{\partial v^h}{\partial t}+\phi_{v}\mathbf{w}\cdot\nabla v^h+c^2\nabla u^h\cdot\nabla \phi_{v}=\\ 
\int_{\pa \Omega_j}c^2 \phi_{v}\nabla u^*\cdot\mathbf{n}-(v^*-v^h)\phi_{v}\mathbf{w}\cdot\mathbf{n}.
\end{multline}
Lastly, we must supplement (\ref{u_DG_use}) with an equation to determine the mean value of $\f {\pa u^h}{\pa t}$.
Precisely, for an arbitrary constant $\tilde{\phi}_u$ we have,
\begin{eqnarray} \label{mean_u}
\int_{\Omega_j} \tilde{\phi}_u (\frac{\partial u^h}{\partial t}+\mathbf{w}\cdot\nabla u^h-v^h) =0.
\end{eqnarray}
Note that this equation does not change the energy.

Setting $\mathbf{\Phi} = (\phi_u, \phi_v, \tilde{\phi}_u)$, $\mathbf{U} = (u^h, v^h)$ we arrive at our final form:
\begin{multline*}
\mathcal{B}(\mathbf{\Phi}, \mathbf{U}) =
 \sum_j\int_{\Omega_j} \Big[ (c^2\nabla\phi_u\cdot\nabla+\tilde{\phi}_u)(\frac{\partial u^h}{\partial t}+\mathbf{w}\cdot\nabla u^h-v^h)+\phi_v\frac{\partial v^h}{\partial t}+\phi_v\mathbf{w}\cdot\nabla v^h  \\ 
  + c^2\nabla\phi_v\cdot\nabla u^h \Big] -\sum_j\int_{\partial\Omega_j} \Big[ c^2(v^*-v^h)\nabla\phi_u\cdot\mathbf{n}+c^2\phi_v\nabla u^*\cdot\mathbf{n}  \\
  -c^2\nabla\phi_u\cdot(\nabla u^*-\nabla u^h)\mathbf{w}\cdot\mathbf{n} - (v^*-v^h)\phi_v\mathbf{w}\cdot\mathbf{n} \Big].
\end{multline*}

Denote by $\mathcal{N}$ the space of arbitrary constants on an element, then we may state the semidiscrete problem as
\begin{prob} \label{problem}
Find $\mathbf{U} = (u^h,v^h) \in \mathcal{P}^{q,s,m}$ such that for all $ \mathbf{\Phi} \in \mathcal{P}^{q,s,m} \times \mathcal{N}$. 
\be \label{problem_a}
\mathcal{B}(\mathbf{\Phi},\mathbf{U}) = 0.
\ee
\end{prob}
We then have the following result.
\begin{theorem}
Let $\mathbf{U}(t)$ and the fluxes $v^*$, $\nabla u^*$ be given.
Then $\frac{d\mathbf{U}}{dt}$ is uniquely determined, and the energy identity
\begin{multline}\label{derivative_discrete_energy_use}
\frac{dE_j^h}{dt} =  \int_{\partial\Omega_j} \Big[ -\frac{1}{2}c^2|\nabla u^h|^2\mathbf{w}\cdot\mathbf{n}-\frac{1}{2}(v^h)^2\mathbf{w}\cdot\mathbf{n}+c^2(v^*-v^h)\nabla u^h\cdot\mathbf{n} \\
 -c^2\nabla u^h\cdot(\nabla u^*-\nabla u^h)\mathbf{w}\cdot\mathbf{n}+c^2{v^h}\nabla u^*\cdot\mathbf{n}-v^h(v^*-v^h)\mathbf{w}\cdot\mathbf{n} \Big],
\end{multline}
holds. 
\end{theorem}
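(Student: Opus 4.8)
The plan is to prove the two assertions separately, starting with the \emph{unique determination of $d\mathbf{U}/dt$}. In the bilinear form $\mathcal{B}$ the time derivatives occur only in the element integrals $\int_{\Omega_j}(c^2\nabla\phi_u\cdot\nabla+\tilde\phi_u)\frac{\partial u^h}{\partial t}$ and $\int_{\Omega_j}\phi_v\frac{\partial v^h}{\partial t}$, so, with $\mathbf{U}(t)$ and the fluxes $v^*,\nabla u^*$ regarded as data, (\ref{problem_a}) is a linear system for $d\mathbf{U}/dt$ that decouples over elements. I would show the homogeneous system on a single $\Omega_j$ has only the trivial solution: if $(p,r)\in(\Pi^q)^m\times(\Pi^s)^m$ satisfies $\int_{\Omega_j}\big((c^2\nabla\phi_u\cdot\nabla+\tilde\phi_u)p+\phi_v r\big)=0$ for all $\phi_u,\phi_v,\tilde\phi_u$, then taking $(\phi_u,\phi_v,\tilde\phi_u)=(0,r,0)$ forces $r=0$, taking $(p,0,0)$ forces $\nabla p=0$, and taking $(0,0,p)$---with $p$ now a constant---forces $\int_{\Omega_j}|p|^2=0$, hence $p=0$. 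Since $\mathcal{B}$ depends on $\phi_u$ only through $\nabla\phi_u$, the effective test space and the space of admissible values of $d\mathbf{U}/dt$ on each element have the same dimension---this is precisely the role played by the mean-value equation~(\ref{mean_u})---so the per-element system is square and injectivity yields unique solvability. Getting this dimension count right is the only subtle point here.

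For the \emph{energy identity}---which in fact holds for any $d\mathbf{U}/dt$ satisfying Problem~\ref{problem}---I would test with the discrete solution itself. Fix $\Omega_j$ and take $\mathbf{\Phi}=(\phi_u,\phi_v,\tilde\phi_u)$ supported in $\Omega_j$ with $\phi_u=u^h|_{\Omega_j}$, $\phi_v=v^h|_{\Omega_j}$, and $\tilde\phi_u$ an arbitrary constant; then~(\ref{problem_a}) reduces to the single-element balance in which the volume integrand of $\mathcal{B}$ over $\Omega_j$ equals the boundary integrand over $\partial\Omega_j$. In the volume integrand the term carrying $\tilde\phi_u$ vanishes by~(\ref{mean_u}), and the two occurrences of $c^2\nabla u^h\cdot\nabla v^h$---one from $c^2\nabla\phi_u\cdot\nabla(-v^h)$ with $\phi_u=u^h$, the other from $c^2\nabla\phi_v\cdot\nabla u^h$ with $\phi_v=v^h$---cancel, so the volume integral collapses to
\begin{multline*}
\int_{\Omega_j}\Big(c^2\nabla u^h\cdot\nabla\frac{\partial u^h}{\partial t}+v^h\frac{\partial v^h}{\partial t}\Big) \\
{}+\int_{\Omega_j}\Big(c^2\nabla u^h\cdot\nabla(\mathbf{w}\cdot\nabla u^h)+v^h\,\mathbf{w}\cdot\nabla v^h\Big),
\end{multline*}
of which the first integral equals $dE_j^h/dt$ by~(\ref{discrete_energy}).

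It then remains to convert the second integral into a boundary term. Since $\mathbf{w}$ is constant, $v^h\,\mathbf{w}\cdot\nabla v^h=\nabla\cdot\big(\tfrac12(v^h)^2\mathbf{w}\big)$, and, using $\partial_i(\mathbf{w}\cdot\nabla u^h)=\mathbf{w}\cdot\nabla(\partial_i u^h)$, also $c^2\nabla u^h\cdot\nabla(\mathbf{w}\cdot\nabla u^h)=\nabla\cdot\big(\tfrac12 c^2|\nabla u^h|^2\mathbf{w}\big)$, so the divergence theorem replaces the second integral by $\int_{\partial\Omega_j}\big(\tfrac12(v^h)^2+\tfrac12 c^2|\nabla u^h|^2\big)\mathbf{w}\cdot\mathbf{n}$. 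Substituting and solving for $dE_j^h/dt$ leaves on the right the boundary integrand of $\mathcal{B}$ evaluated at $(\phi_u,\phi_v)=(u^h,v^h)$ minus the terms $\tfrac12 c^2|\nabla u^h|^2\mathbf{w}\cdot\mathbf{n}$ and $\tfrac12(v^h)^2\mathbf{w}\cdot\mathbf{n}$, and comparing term by term recovers exactly~(\ref{derivative_discrete_energy_use}). This last step is pure bookkeeping; the only genuine hypothesis is the constancy of $\mathbf{w}$---the same assumption underlying the continuous identity~(\ref{energy}). Thus the main obstacle is not any hard estimate but the dimension count behind well-posedness together with recognizing which volume terms cancel and which convert to fluxes.
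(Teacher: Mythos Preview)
Your proposal is correct and follows essentially the same approach as the paper: uniqueness via injectivity of the per-element square system (the paper phrases this as ``if the data vanishes then $\partial_t u^h=0$''), and the energy identity by testing with $\mathbf{\Phi}=(\mathbf{U},0)$ in Problem~\ref{problem}. Your version is more explicit about the cancellation of the $c^2\nabla u^h\cdot\nabla v^h$ terms and the use of the divergence theorem (with constant $\mathbf{w}$) to convert the advective volume terms to boundary terms, steps the paper compresses into the word ``directly.''
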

\begin{proof}
The system on the element $\Omega_j$ is linear in the time derivatives, and the mass matrix of $\frac{dv^h}{dt}$ is nonsingular.  The number of linear equations for $\frac{du^h}{dt}$, which equals the number of independent equations in (\ref{u_DG_use}) plus the equation in (\ref{mean_u}), matches the dimensionality of $(\Pi^q)^m$. If the data $v^h$, $v^*$, $\nabla u^h$, $\nabla u^*$ vanishes in (\ref{u_DG_use}), we must have $\frac{d u^h}{dt} = 0$, and so the
linear system is invertible. By setting $\Phi = (\mathbf{U},0)$ in Problem \ref{problem}, we obtain (\ref{derivative_discrete_energy_use}) directly.
\end{proof}

\subsection{Fluxes}\label{fluxes}
To complete the problem specification we must prescribe the states $\nabla u^\ast, v^*$  both at inter-element and physical boundaries. 
Let ``$+$'' refer to traces of data from outside and ``$-$'' represent traces of data from inside. Moreover, we introduce the notation
\begin{align*}
 \{\{v\}\} &=\frac{1}{2}(v^+ + v^-),    & [[v]] &=v^+ \mathbf{n}^+ +v^- \mathbf{n}^-, \\ 
 \{\{\nabla u\}\}&=\frac{1}{2}(\nabla u^++\nabla u^-), &  [[\nabla{u}]] &=\nabla {u^+} \cdot \mathbf{n}^+ + \nabla {u^-} \cdot \mathbf{n}^- . 
\end{align*}
We firstly consider the inter-element boundaries. For definiteness label two elements sharing a boundary by $1$ and $2$.  Then their net contribution to the energy derivative is the boundary integral of
\begin{multline*}
J^h = -\frac{1}{2}c^2|\nabla u_{1}^h|^2\mathbf{w}\cdot\mathbf{n}_1-\frac{1}{2}(v_{1}^{h})^2\mathbf{w}\cdot \mathbf{n}_1+c^2(v^*-v_{1}^h)\nabla u_{1}^h\cdot\mathbf{n}_1\\
-c^2\nabla u_{1}^h\cdot(\nabla u^*-\nabla u_{1}^{h})\mathbf{w}\cdot\mathbf{n}_1+c^2v_{1}^h\nabla u^*\cdot\mathbf{n}_1-v_{1}^h (v^*-v_1)\mathbf{w}\cdot\mathbf{n}_1\\
-\frac{1}{2}c^2|\nabla u_{2}^h|^2\mathbf{w}\cdot\mathbf{n}_2-\frac{1}{2}(v_{2}^{h})^2\mathbf{w}\cdot \mathbf{n}_2+c^2(v^*-v_{2}^h)\nabla u_{2}^h\cdot\mathbf{n}_2\\
-c^2\nabla u_{2}^h\cdot(\nabla u^*-\nabla u_{2}^{h})\mathbf{w}\cdot\mathbf{n}_2+c^2v_{2}^h\nabla u^*\cdot\mathbf{n}_2-v_{2}^h (v^*-v_2)\mathbf{w}\cdot\mathbf{n}_2
\end{multline*}
There will be energy conservation if $J^h = 0$, and a typical example is given by the {\textit{central flux}},
\bd 
 v^{*} = \{\{v\}\},
\ \   \nabla u^{*} = \{\{\nabla u\}\}.
\ed
To define upwind fluxes, which will lead to $J^h<0$ in the presence of jumps, we first assume
$\arrowvert \mathbf{w} \cdot \mathbf{n} \arrowvert \leq c$,
and introduce a flux splitting determined by a parameter $\xi>0$ which has units of $c$,
\begin{eqnarray*}
v\nabla u\cdot\mathbf{n}=\frac{1}{4\xi}(v+\xi\nabla u\cdot\mathbf{n})^2-\frac{1}{4\xi}(v-\xi\nabla u\cdot\mathbf{n})^2=F^+ - F^- .
\end{eqnarray*}
Now choose the boundary states so that $F^+$ is computed using values from the outside of the element, and $F^-$ using the values from inside.  That is, we enforce the equation for $l=1,2$,
\bd 
v^*-\xi\nabla u^*\cdot\mathbf{n}_l=v_{l}^h-\xi\nabla u_{l}^h\cdot\mathbf{n}_l, \ \ \xi > 0.
\ed
Solving and additionally setting the tangential components of $\nabla u^{\ast}$ to be the average of the values from each side
we derive what we call the {\textit{Sommerfeld flux}}:
\bd
v^*=\{\{v^h\}\}-\frac{\xi}{2}[[\nabla u^h]],\ \ \ \ \nabla u^*=-\frac{1}{2\xi}[[v^h]]+\{\{\nabla u^h\}\}.
\ed
For this choice we find
\bd 
 J^h=-\Big(\frac{\xi c^2}{2}[[\nabla u^h]]^2+\frac{c^2}{2\xi}\Big|[[v^h]]\Big|^2-(\frac{c^2}{2\xi}+\frac{\xi }{2} )[[\nabla u^h]][[v^h]]\cdot\mathbf{w}\Big).
\ed
Denoting by the subscript ${\tau}$ the orthogonal projection of any vector onto the tangent space of the element boundary, we can rewrite
this formula 
\begin{eqnarray*}
J^h & = &  -\Big(\frac{\xi c^2}{2}[[\nabla u^h]]^2+\frac{c^2}{2\xi}\Big|[[v^h]]\Big|^2 
-(\frac{c^2}{2\xi}+\frac{\xi }{2} )[[\nabla u^h]](([[v^h]]\cdot\mathbf{n})(\mathbf{w}\cdot\mathbf{n})+([[v^h]]_{\tau}\cdot \mathbf{w}_{\tau}))\Big) \\
 & = & -\Big(\frac{\xi c^2}{2}[[\nabla u^h]]^2+\frac{c^2}{2\xi}\Big|[[v^h]]\Big|^2-(\frac{c^2}{2\xi}+\frac{\xi }{2} )[[\nabla u^h]]([[v^h]]\cdot\mathbf{n})(\mathbf{w}\cdot\mathbf{n})\Big).
\end{eqnarray*}
Moreover, we have
\bd 
[[\nabla u]]([[v]]\cdot\mathbf{n})\leq \frac{\alpha}{2}[[\nabla u]]^2+\frac{1}{2\alpha}\Big|[[v]]\Big|^2, \ \ \alpha > 0,
\ed
so that
\bd
J^h\leq -\frac{\xi c^2}{2}[[\nabla u^h]]^2-\frac{c^2}{2\xi}\Big|[[v^h]]\Big|^2\\
+(\frac{c^2}{2\xi}+\frac{\xi }{2} )|\mathbf{w}\cdot\mathbf{n}|\Big(\frac{\alpha}{2}[[\nabla u^h]]^2+\frac{1}{2\alpha}\Big|[[v^h]]\Big|^2\Big).
\ed
Now, if
\bd 
-\frac{\xi c^2}{2}+(\frac{c^2}{2\xi}+\frac{\xi}{2})|\mathbf{w}\cdot\mathbf{n}|\frac{\alpha}{2}\leq 0\ \ \text{and} \ \  -\frac{c^2}{2\xi}+(\frac{c^2}{2\xi}+\frac{\xi}{2})|\mathbf{w}\cdot\mathbf{n}|\frac{1}{2\alpha}\leq 0,
\ed
which requires
\be \label{bound_alpha}
\frac{(c^2+\xi^2)|\mathbf{w}\cdot\mathbf{n}|}{2c^2}\leq\alpha\leq\frac{2\xi^2c^2}{(c^2+\xi^2)|\mathbf{w}\cdot\mathbf{n}|}.
\ee
Then, our numerical energy will not grow. In the following, we are going to claim the existence of
$\alpha$ satisfying (\ref{bound_alpha}). Since
\bd 
\frac{2\xi^2c^2}{(c^2+\xi^2)|\mathbf{w}\cdot\mathbf{n}|}-\frac{(c^2+\xi^2)|\mathbf{w}\cdot\mathbf{n}|}{2c^2}=\frac{4\xi^2c^4-(c^2+\xi^2)^2|\mathbf{w}\cdot\mathbf{n}|^2}{2c^2|\mathbf{w}\cdot\mathbf{n}|(c^2+\xi^2)} ,
\ed
if $4\xi^2c^4-(c^2+\xi^2)^2|\mathbf{w}\cdot\mathbf{n}|^2\geq 0$, i.e,
\be \label{bound_w_1}
|\mathbf{w}\cdot\mathbf{n}|\leq\frac{2\xi c^2}{c^2+\xi^2},
\ee 
we conclude that (\ref{bound_alpha}) can be satisfied. Therefore, we will have a decreasing energy if $|\mathbf{w}\cdot\mathbf{n}| < \frac{2\xi c^2}{c^2+\xi^2}$ and an unchanged energy if $|\mathbf{w}\cdot\mathbf{n}| = \frac{2\xi c^2}{c^2+\xi^2}$. Particularly, if $\xi = c$, we will get a decreasing energy if $|\mathbf{w}\cdot\mathbf{n}|<c$ and an unchanged energy if $|\mathbf{w}\cdot\mathbf{n}| = c$.\\

A general parametrization of the flux is given by
\bd
v^{*} = (\sigma v_{1}^{h} + (1-\sigma) v_{2}^{h}) - \eta[[\nabla u^{h}]],\ \ \
\nabla u^{*} = -\beta [[v^{h}]] + \big((1-\sigma)\nabla u_{1}^{h} + \sigma \nabla u_{2}^{h}\big),
\ed
with $0\leq\sigma\leq 1$, $\beta$,$\eta \geq 0$. For this general flux form, we find
\bd
J^{h} = -\Big(c^{2}\eta[[\nabla u^h]]^2+c^{2}\beta\Big|[[v^h]]\Big|^2-(c^{2}\beta+\eta )[[\nabla u^h]]([[v^h]]\cdot\mathbf{n})(\mathbf{w}\cdot\mathbf{n})\Big),
\ed
The previous situations correspond to the following:\\
\textit{Central flux}: $\sigma = \frac{1}{2}$, $\beta = \eta = 0$.\\
\textit{Sommerfeld flux}: $\sigma = \frac{1}{2}$, $\beta = \frac{1}{2\xi}$, $\eta = \frac{\xi}{2}$.\\

We also consider the situation $c < \arrowvert \mathbf{w} \cdot \mathbf{n} \arrowvert$ in which case
\textit{upwind fluxes} only come from one element. For example
\be \label{flux_onedir1}
 v^* = v_{1}^h,  \ \  \nabla u^* = \nabla u_{1}^h,
\ee
or
\be   \label{flux_onedir2}
v^{*} = v_{2}^h,\ \  \nabla u^{*} = \nabla u_{2}^h.
\ee
Based on (\ref{flux_onedir1}), we then have 
\begin{multline} \label{J_flux_onedir1}
J^{h} = \frac{1}{2}\Big(c^{2}\arrowvert \nabla u_{1}^h - \nabla u_{2}^h \arrowvert^{2} + (v_{1}^h - v_{2}^h)^{2}\Big)\mathbf{w}\cdot\mathbf{n}_{2}+c^{2}(\nabla u_{1}^h-\nabla u_{2}^h)\cdot\mathbf{n}_{1}(v_{1}^h-v_{2}^h) \\
 \leq \frac{(\mathbf{w}\cdot\mathbf{n}_{2}+c)}{2}\Big( c^2 \arrowvert \nabla u_{1}^h-\nabla u_{2}^h \arrowvert^{2}+(v_{1}^h-v_{2}^h)^{2}\Big).
\end{multline}
then $J^h \leq 0$ if $\mathbf{w}\cdot\mathbf{n}_2\leq -c$. From (\ref{flux_onedir2}), we have
\begin{multline} \label{J_flux_onedir2}
J^{h}  =  \frac{1}{2}\Big(c^{2} \arrowvert \nabla u_{1}^h - \nabla u_{2}^h \arrowvert^{2} + (v_{1}^h - v_{2}^h)^{2}\Big)\mathbf{w}\cdot\mathbf{n}_{1}+c^{2}(\nabla u_{1}^h-\nabla u_{2}^h)\cdot\mathbf{n}_{2}(v_{1}^h-v_{2}^h) \\
 \leq \frac{(\mathbf{w}\cdot\mathbf{n}_{1}+c)}{2}\Big(c^2 \arrowvert \nabla u_{1}^h-\nabla u_{2}^h \arrowvert^{2}+(v_{1}^h-v_{2}^h)^{2}\Big).
\end{multline}
then $J^h \leq 0$ if $\mathbf{w}\cdot\mathbf{n}_1\leq -c$.

\subsection{Boundary conditions}
Next we consider physical boundary conditions. We consider separately
inflow boundaries for which we have ${\bf w}\cdot{\bf n} < 0$ and outflow boundaries, where ${\bf w}\cdot{\bf n} > 0$.

\subsubsection{Inflow boundary conditions}
On an inflow boundary, $\mathbf{w}\cdot\mathbf{n}<0$, we choose the Dirichlet boundary condition,
$u(\mathbf{x},t) = 0$, which implies $\frac{\partial u(\mathbf{x},t)}{\partial t} = 0$. Then 
\[ 
 v(\mathbf{x},t) = \frac{\partial u(\mathbf{x},t)}{\partial t}+\mathbf{w}\cdot\nabla u(\mathbf{x},t)=\mathbf{w}\cdot\nabla u(\mathbf{x},t).
 \]
Considering the flux terms, assuming again that $c \geq \arrowvert \mathbf{w} \cdot \mathbf{n} \arrowvert$, we enforce the following conditions,
\begin{equation*} 
\left\{
\begin{array}{ll}
v^* - \mathbf{w}\cdot\nabla u^* = 0 ,     \\
v^* - \xi\nabla u^*\cdot\mathbf{n} = v^h - \xi\nabla u^h\cdot\mathbf{n}  ,\\
(\nabla u^*)_\tau = 0 .
\end{array}
\right.
\end{equation*}
Solving this system we find
\be  \label{inflow_fluxes}
\nabla u^*\cdot\mathbf{n} = \frac{\xi\nabla u^h\cdot\mathbf{n}-v^h}{\xi-\mathbf{w}\cdot\mathbf{n}}, \ \ v^* = \frac{\mathbf{w}\cdot\mathbf{n}}{\xi-\mathbf{w}\cdot\mathbf{n}}(\xi\nabla u^h\cdot\mathbf{n}-v^h).
\ee
where we have used the fact $\mathbf{w}\cdot\nabla u^* = (\mathbf{w}\cdot\mathbf{n})(\nabla u^*\cdot\mathbf{n}) + \mathbf{w}_{\tau} \cdot (\nabla u^*)_{\tau}$. Then through simple calculation we find 
\begin{multline} \label{equality_2}
-c^2\nabla u^h \cdot\nabla u^*-v^h v^*= \\
-\frac{c^2\xi}{\xi-\mathbf{w}\cdot\mathbf{n}}(\nabla u^h\cdot\mathbf{n})^2+\frac{c^2-\xi\mathbf{w}\cdot\mathbf{n}}{\xi-\mathbf{w}\cdot\mathbf{n}}(\nabla u^h\cdot\mathbf{n})v^h+\frac{\mathbf{w}\cdot\mathbf{n}}{\xi-\mathbf{w}\cdot\mathbf{n}}(v^h)^2.
\end{multline}
and
\be \label{equality_3}
c^2(v^*-v^h)\nabla u^h\cdot\mathbf{n}+c^2v^h\nabla u^*\cdot\mathbf{n}=\frac{c^2\xi\mathbf{w}\cdot\mathbf{n}}{\xi-\mathbf{w}\cdot\mathbf{n}}(\nabla u^h\cdot\mathbf{n})^2-\frac{c^2}{\xi-\mathbf{w}\cdot\mathbf{n}}(v^h)^2.
\ee
Denote by the subscript $I$ faces with inflow. Plugging (\ref{inflow_fluxes}) into (\ref{derivative_discrete_energy_use}) and using (\ref{equality_2}) and (\ref{equality_3}) to simplify the resulting equation we find 
\begin{multline*} 
\frac{dE_{j_I}^h}{dt} = \int_{\partial \Omega_{j_I}}\Big(\frac{1}{2}c^2|\nabla u^h|^2+\frac{1}{2}(v^h)^2-c^2\nabla u^h\cdot\nabla u^*-v^hv^*\Big)\mathbf{w}\cdot\mathbf{n} \nonumber \\  +c^2(v^*-v^h)\nabla u^h\cdot\mathbf{n} 
 +c^2{v^h}\nabla u^*\cdot\mathbf{n} \\ 
 =\int_{\partial\Omega_{j_I}}\frac{c^2\mathbf{w}\cdot\mathbf{n}}{2}\arrowvert \nabla u^h_{\tau} \arrowvert^2 + \frac{c^2\mathbf{w}\cdot\mathbf{n}}{2}(\nabla u^h\cdot\mathbf{n})^2 +     
\Big(\frac{\mathbf{w}\cdot\mathbf{n}}{2} 
+\frac{(\mathbf{w}\cdot\mathbf{n})^2-c^2}{\xi-\mathbf{w}\cdot\mathbf{n}}\Big) 
 \cdot (v^h)^2 \\
 + \frac{(c^2-\xi\mathbf{w}\cdot\mathbf{n})\mathbf{w}\cdot\mathbf{n}}{\xi-\mathbf{w}\cdot\mathbf{n}}(\nabla u^h\cdot\mathbf{n})v^h.
\end{multline*}
Noting that $\mathbf{w}\cdot\mathbf{n}<0$ on the inflow boundaries, denote $a = \frac{c^2\mathbf{w}\cdot\mathbf{n}}{2}$, $b = \frac{(c^2-\xi\mathbf{w}\cdot\mathbf{n})\mathbf{w}\cdot\mathbf{n}}{2(\xi-\mathbf{w}\cdot\mathbf{n})}$ and $d = \frac{\mathbf{w}\cdot\mathbf{n}}{2}+\frac{(\mathbf{w}\cdot\mathbf{n})^2-c^2}{\xi-\mathbf{w}\cdot\mathbf{n}}$, then we will get a decreasing energy if $ad > b^2$.
 Now, let us claim the fact $ad > b^2$. Since
 \bd 
 ad  = \frac{c^2\xi^2(\mathbf{w}\cdot\mathbf{n})^2+2c^4(\mathbf{w}\cdot\mathbf{n})^2-2c^4\xi(\mathbf{w}\cdot\mathbf{n})-c^2(\mathbf{w}\cdot\mathbf{n})^4}{4(\xi-\mathbf{w}\cdot\mathbf{n})^2},
 \ed
 and
 \bd 
 b^2  = \frac{c^4(\mathbf{w}\cdot\mathbf{n})^2+\xi^2(\mathbf{w}\cdot\mathbf{n})^4-2c^2\xi(\mathbf{w}\cdot\mathbf{n})^3}{4(\xi-\mathbf{w}\cdot\mathbf{n})^2},
 \ed
by simple calculation, we find that the numerator of $ad-b^2$ is
\bd
(c^2-(\mathbf{w}\cdot\mathbf{n})^2)(\mathbf{w}\cdot\mathbf{n})((c^2+\xi^2)\mathbf{w}\cdot\mathbf{n}-2c^2\xi),
\ed
 then if $-c<\mathbf{w}\cdot\mathbf{n}<0$, we will have $ad > b^2$. Thus, we will get a decreasing energy if $-c<\mathbf{w}\cdot\mathbf{n}<0$ and an unchanged energy if $\mathbf{w}\cdot\mathbf{n} = -c$ and $(\nabla u^h)_{\tau}=0$ on the inflow boundaries.

We remark that a straightforward derivation of this estimate leads to constants which are unbounded as $\mathbf{w}
\cdot \mathbf{n} \rightarrow 0$, but using the fact that $v^{\ast} \rightarrow 0$ also the bounds can be maintained. 

If $\mathbf{w} \cdot \mathbf{n} < -c$ we must impose two boundary conditions, $u=0$, $\nabla u \cdot \mathbf{n}=0$
from which we deduce $v^{\ast}=0$, $\nabla u^{\ast}=\mathbf{0}$. Then from (\ref{J_flux_onedir2}) we conclude that the
energy is decreasing.

\subsubsection{Outflow boundary conditions}
 At outflow boundaries, $0 < \mathbf{w}\cdot\mathbf{n} \leq c$, we impose the radiation boundary conditions,
\bd 
\left\{
\begin{array}{ll}
v^* + \xi\nabla u^*\cdot\mathbf{n} = 0 ,    \\
v^* - \xi\nabla u^*\cdot\mathbf{n} = v^h - \xi\nabla u^h\cdot\mathbf{n}  ,\\
(\nabla u^*)_\tau = (\nabla u^h)_\tau.
\end{array}
\right.
\ed
Solving this system, we find that
\be  \label{outflow_fluxes}
\nabla u^*\cdot\mathbf{n} = \frac{\xi\nabla u^h\cdot\mathbf{n}-v^h}{2\xi},\ \  v^* = \frac{v^h-\xi\nabla u^h\cdot\mathbf{n}}{2}.
\ee 
By a simple calculation we obtain
\begin{multline} \label{equality_7}
-c^2\nabla u^h\cdot\nabla u^* - v^h v^*\\
=-\frac{c^2}{2}(\nabla u^h\cdot\mathbf{n})^2-\frac{1}{2}(v^h)^2+(\frac{c^2}{2\xi}+\frac{\xi}{2})(\nabla u^h\cdot\mathbf{n})v^h-c^2 \arrowvert (\nabla u^h)_{\tau}\arrowvert^2,
\end{multline}
and
\be  \label{equality_8}
c^2(v^*-v^h)\nabla u^h\cdot\mathbf{n} + c^2v^h\nabla u^*\cdot\mathbf{n} =-\frac{c^2\xi}{2}(\nabla u^h\cdot\mathbf{n})^2 -\frac{c^2}{2\xi}(v^h)^2.
\ee
Denote by the subscript $O$ faces that have outflow. Using (\ref{outflow_fluxes}) in (\ref{derivative_discrete_energy_use}) and applying (\ref{equality_7}) and (\ref{equality_8}) to the resulting equation, we conclude that
\begin{multline*} 
\frac{dE_{j_O}^h}{dt} = \int_{\partial \Omega_{j_O}} \Bigg[ \Big(\frac{1}{2}c^2|\nabla u^h|^2+\frac{1}{2}(v^h)^2-c^2\nabla u^h\cdot\nabla u^*-v^hv^*\Big)\mathbf{w}\cdot\mathbf{n} \\
+c^2(v^*-v^h)\nabla u^h\cdot\mathbf{n}+c^2{v^h}\nabla u^*\cdot\mathbf{n} \Bigg]\\
=\int_{\partial\Omega_{j_O}} \Big[-c^2 \arrowvert (\nabla u^h)_{\tau}\arrowvert^2\mathbf{w}\cdot\mathbf{n}+\left(\frac{c^2}{2\xi}+\frac{\xi}{2}\right)(\nabla u^h\cdot\mathbf{n})v^h  \mathbf{w}\cdot\mathbf{n}
-\frac{c^2\xi}{2}(\nabla u^h\cdot\mathbf{n})^2 -\frac{c^2}{2\xi}(v^h)^2\Big].
\end{multline*}
For positive $\delta$, we have that
\begin{multline*} 
\Big(\frac{c^2}{2\xi}+\frac{\xi}{2}\Big)(\nabla u^h\cdot\mathbf{n})v^h\mathbf{w}\cdot\mathbf{n}
 \leq \frac{\delta}{2}\Big(\frac{c^2}{2\xi}+\frac{\xi}{2}\Big)\mathbf{w}\cdot\mathbf{n}(\nabla u^h\cdot\mathbf{n})^2+\frac{1}{2\delta}\Big(\frac{c^2}{2\xi}+\frac{\xi}{2}\Big)\mathbf{w}\cdot\mathbf{n}(v^h)^2,
\end{multline*}
then we get a decreasing energy if the following conditions are satisfied
\bd 
\frac{\delta}{2}\Big(\frac{c^2}{2\xi} + \frac{\xi}{2}\Big)\mathbf{w}\cdot\mathbf{n}-\frac{c^2\xi}{2}\leq 0,\ \ \
\frac{1}{2\delta}\Big(\frac{c^2}{2\xi}+\frac{\xi}{2}\Big)\mathbf{w}\cdot\mathbf{n}
-\frac{c^2}{2\xi}\leq 0.
\ed
This is equivalent to 
\bd 
\frac{(c^2+\xi^2)\mathbf{w}\cdot\mathbf{n}}{2c^2}\leq\delta\leq\frac{2c^2\xi^2}{(c^2+\xi^2)\mathbf{w}\cdot\mathbf{n}}.
\ed
Now, the existence of $\delta$ follows as
\bd 
\frac{2c^2\xi^2}{(c^2+\xi^2)\mathbf{w}\cdot\mathbf{n}}-\frac{(c^2+\xi^2)\mathbf{w}\cdot\mathbf{n}}{2c^2} = \frac{4c^4\xi^2-(c^2+\xi^2)^2(\mathbf{w}\cdot\mathbf{n})^2}{2c^2(c^2+\xi^2)\mathbf{w}\cdot\mathbf{n}},
\ed
and then $\delta$ exists if $4c^4\xi^2-(c^2+\xi^2)^2(\mathbf{w}\cdot\mathbf{n})^2\geq0$, i.e. $(\mathbf{w}\cdot\mathbf{n})^{2}\leq\frac{4c^{4}\xi^{2}}{(c^{2}+\xi^{2})^2}$ which in turn gives us a decreasing energy if $0 < \mathbf{w}\cdot\mathbf{n} < \frac{2\xi c^2}{c^2+\xi^2}$, an unchanged energy if $\mathbf{w}\cdot\mathbf{n} = \frac{2\xi c^2}{c^2+\xi^2}$ and $(\nabla u^h)_{\tau} = 0$ on the outflow boundaries. Moreover, if we choose $\xi = c$, we will have a decreasing energy when $0 < \mathbf{w}\cdot\mathbf{n} < c$, an unchanged energy when $\mathbf{w}\cdot\mathbf{n} = c$ and $(\nabla u^h)_{\tau} = 0$ on the outflow boundaries.

Lastly we note that if $\mathbf{w \cdot n} > c$ we impose no boundary conditions, set $v^{\ast}=v^h$ and
$\nabla u^{\ast} = \nabla u^h$, and invoke (\ref{J_flux_onedir1}) to conclude that the energy is decreasing. 

Combining all the inter-element and physical boundaries we can now state the  final energy identity.
\begin{theorem}
The discrete energy $E^h(t) = \sum_j E_j^h(t)$ with $E_j^h(t)$ defined in (\ref{discrete_energy}) satisfies
\begin{multline} \label{derivative_discrete_energy_all}
\frac{dE^h}{dt} = -\sum_j\int_{F_j} 
\Big[ 
c^{2}\eta[[\nabla u^h]]^2+c^{2}\beta\big|[[v^h]]\big|^2
-c^{2}\beta[[v^h]]\cdot\big(\nabla u^h_1(\mathbf{w}\cdot\mathbf{n}_1)
+\nabla u^h_2(\mathbf{w}\cdot\mathbf{n}_2) \big)  \\
- \eta [[\nabla u^h]] [[v^h]] \cdot \mathbf{w} \Big] 
 +\sum_j\int_{B_{j_I}} \Big[ \frac{c^2\mathbf{w}\cdot\mathbf{n}}{2} \arrowvert (\nabla u^h)_{\tau}\arrowvert^2 
 +  \frac{c^2\mathbf{w}\cdot\mathbf{n}}{2} (\nabla u^h\cdot\mathbf{n})^2 \\
+  \Big(\frac{\mathbf{w}\cdot\mathbf{n}}{2}
+\frac{(\mathbf{w}\cdot\mathbf{n})^2-c^2}{\xi-\mathbf{w}\cdot\mathbf{n}}\Big)(v^h)^2 
+ \frac{(c^2-\xi\mathbf{w}\cdot\mathbf{n})\mathbf{w}\cdot\mathbf{n}}{\xi-\mathbf{w}\cdot\mathbf{n}}(\nabla u^h\cdot\mathbf{n})v^h \Big]  \\ 
+ \sum_j\int_{B_{j_O}} \Big[ 
-\frac{c^2}{2} \arrowvert (\nabla u^h)_{\tau}\arrowvert^2\mathbf{w}\cdot\mathbf{n}
+(\frac{c^2}{2\xi}
+\frac{\xi}{2})(\nabla u^h\cdot\mathbf{n})v^h\mathbf{w}\cdot\mathbf{n}\\
-\frac{c^2\xi}{2}(\nabla u^h\cdot\mathbf{n})^2-\frac{c^2}{2\xi}(v^h)^2 \Big],
\end{multline}
where $F_j$ represents the inter-element boundaries, $B_{jI}$ represents inflow physical boundaries and $B_{jO}$ represents outflow boundaries.
\end{theorem}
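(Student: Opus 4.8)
The plan is to assemble the global identity (\ref{derivative_discrete_energy_all}) from the per-element identity (\ref{derivative_discrete_energy_use}) by summing over $j$ and then reorganizing the resulting boundary integrals according to the type of face. First I would sum (\ref{derivative_discrete_energy_use}) over all elements, so that $\frac{dE^h}{dt}=\sum_j\frac{dE_j^h}{dt}$ becomes a sum of integrals over the $\partial\Omega_j$. Every interior face is shared by exactly two elements, so I would collect the pair of contributions coming from the two elements adjacent to each inter-element face $F_j$; every physical boundary face belongs to exactly one element and contributes either to an inflow face $B_{jI}$ or to an outflow face $B_{jO}$, according to the sign of $\mathbf{w}\cdot\mathbf{n}$.

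For the inter-element faces, the integrand of the two-element contribution is precisely the quantity $J^h$ written out at the beginning of Section \ref{fluxes}. I would substitute the general flux parametrization $v^{*}=\sigma v_1^h+(1-\sigma)v_2^h-\eta[[\nabla u^h]]$ and $\nabla u^{*}=-\beta[[v^h]]+((1-\sigma)\nabla u_1^h+\sigma\nabla u_2^h)$ into that expression. Using $\mathbf{n}_1=-\mathbf{n}_2$ together with the jump/average notation, the $\sigma$-dependent pieces cancel (exactly as recorded in the excerpt, where $J^h$ for the general flux is shown to depend only on $\beta$ and $\eta$), leaving $J^h=-\big(c^2\eta[[\nabla u^h]]^2+c^2\beta|[[v^h]]|^2-(c^2\beta+\eta)[[\nabla u^h]]([[v^h]]\cdot\mathbf{n})(\mathbf{w}\cdot\mathbf{n})\big)$. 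Finally I would re-expand the single cross term $-(c^2\beta+\eta)[[\nabla u^h]]([[v^h]]\cdot\mathbf{n})(\mathbf{w}\cdot\mathbf{n})$ into the two-element form appearing in (\ref{derivative_discrete_energy_all}), namely $-c^2\beta[[v^h]]\cdot\big(\nabla u_1^h(\mathbf{w}\cdot\mathbf{n}_1)+\nabla u_2^h(\mathbf{w}\cdot\mathbf{n}_2)\big)-\eta[[\nabla u^h]][[v^h]]\cdot\mathbf{w}$; this amounts to writing $[[\nabla u^h]]$ and $[[v^h]]$ out explicitly and invoking, on each adjacent element, the normal/tangential splitting $[[v^h]]\cdot\mathbf{w}=([[v^h]]\cdot\mathbf{n})(\mathbf{w}\cdot\mathbf{n})+[[v^h]]_\tau\cdot\mathbf{w}_\tau$ used earlier in the same subsection.

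For the physical boundaries there is nothing new to establish: on inflow faces I would simply insert the already-computed simplification of $\frac{dE_{j_I}^h}{dt}$ obtained by substituting (\ref{inflow_fluxes}) into (\ref{derivative_discrete_energy_use}) and invoking (\ref{equality_2})–(\ref{equality_3}), and on outflow faces the analogous simplification obtained from (\ref{outflow_fluxes}) together with (\ref{equality_7})–(\ref{equality_8}). Summing the three groups of terms then yields (\ref{derivative_discrete_energy_all}).

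I expect the only genuine obstacle to be the sign and orientation bookkeeping: keeping straight which adjacent element is labeled $1$ and which is $2$ on each interior face, handling the outward normals $\mathbf{n}_1=-\mathbf{n}_2$ consistently, and verifying that the compact jump-notation form of the cross term and its explicit two-element form in (\ref{derivative_discrete_energy_all}) genuinely coincide. This is the routine but error-prone step of collapsing element-wise boundary integrals into single face integrals; everything else in the proof is a direct citation of identities already derived in the preceding subsections.
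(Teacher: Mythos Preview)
Your proposal is correct and matches the paper's approach exactly: the paper offers no separate proof of this theorem, presenting it instead as the direct assembly (``Combining all the inter-element and physical boundaries\ldots'') of the per-element identity (\ref{derivative_discrete_energy_use}), the inter-element expression $J^h$ computed in Section~\ref{fluxes} for the general flux, and the inflow/outflow simplifications already worked out from (\ref{inflow_fluxes})--(\ref{equality_3}) and (\ref{outflow_fluxes})--(\ref{equality_8}). Your identification of the orientation bookkeeping as the only nontrivial step is accurate, and your verification that the compact cross-term $-(c^2\beta+\eta)[[\nabla u^h]]([[v^h]]\cdot\mathbf{n})(\mathbf{w}\cdot\mathbf{n})$ unpacks into the two-element form in (\ref{derivative_discrete_energy_all}) via $\mathbf{n}_1=-\mathbf{n}_2$ and the vanishing of $[[v^h]]_\tau$ is exactly the missing algebraic link.
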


If the flux parameters $\sigma$, $\beta$ and $\eta$ are chosen based on Section \ref{fluxes}, then $\frac{dE^h(t)}{dt} \leq 0$.

\section{Error estimates in the energy norm}\label{energyerr}
We define the errors by
\bd 
e_u = u - u^h, \ \  e_v = v - v^h,
\ed
and let
\bd 
\mathbf{D}^h = (e_u, e_v).
\ed
Note the fundamental Galerkin orthogonality relation:
\bd 
\mathcal{B}(\mathbf{\Phi}, \mathbf{D}^h) = 0.
\ed

To proceed we follow the standard approach of comparing $(u^h, v^h)$ to an arbitrary polynomial $(\tilde{u}^h, \tilde{v}^h)\in\mathcal{P}^{q,s,m}$. we define the differences
\bd
\tilde{e}_u = \tilde{u}^h - u^h, 
\ \ \tilde{e}_v = \tilde{v}^h - v^h, 
\ \  \delta_u = \tilde{u}^h - u, 
\ \  \delta_v = \tilde{v}^h - v,
\ed
and let
\bd
\tilde{\mathbf{D}}^h = (\tilde{e}_u, \tilde{e}_v)\in\mathcal{P}^{q,s,m}, 
\ \  \tilde{\mathbf{D}}_0^h = (\tilde{e}_u, \tilde{e}_v, 0)\in\mathcal{P}^{q,s,m}\times\mathcal{N}, 
\ \  \mathbf{\Delta}^h = (\delta_u, \delta_v).
\ed
Then, since $\mathbf{D}^h = \tilde{\mathbf{D}}^h - \mathbf{\Delta}^h$, we have the error equation
\bd 
\mathcal{B}(\tilde{\mathbf{D}}_0^h, \tilde{\mathbf{D}}^h)=\mathcal{B}(\tilde{\mathbf{D}}_0^h, \mathbf{\Delta}^h).
\ed
Finally, define the energy of $\tilde{\mathbf{D}}^h$ by
\bd 
\mathcal{E}^h = \frac{1}{2}\sum_{j}\int_{\Omega_j}\tilde{e}_v^2 + c^2|\nabla\tilde{e}_u|^2.
\ed
Then repeating the arguments which led to (\ref{derivative_discrete_energy_all}), we derive:
\begin{multline} \label{derivative_error_energy}
\frac{d\mathcal{E}^h}{dt} = 
\mathcal{B}(\tilde{D}_0^h, \mathbf{\Delta}^h) \\
- \sum_j\int_{F_j} \Big[ c^{2}\eta[[\nabla \tilde{e}_u]]^2
 + c^{2}\beta\big|[[\tilde{e}_v]]\big|^2
  - c^{2}\beta[[\tilde{e}_v]]\cdot\big(\nabla\tilde{e}_{u1}(\mathbf{w}\cdot\mathbf{n}_1)  \\
 +\nabla \tilde{e}_{u2}(\mathbf{w}\cdot\mathbf{n}_2)\big)
  -\eta [[\nabla \tilde{e}_u]][[\tilde{e}_v]]\cdot\mathbf{w} \Big] \\
+ \sum_j\int_{B_{j_I}} 
\Big[\frac{c^2\mathbf{w}\cdot\mathbf{n}}{2} \arrowvert (\nabla\tilde{e}_u)_{\tau} \arrowvert^2 
 + \frac{c^2\mathbf{w}\cdot\mathbf{n}}{2}(\nabla\tilde{e}_u\cdot\mathbf{n})^2 
+ \Big(\frac{\mathbf{w}\cdot\mathbf{n}}{2}
+\frac{(\mathbf{w}\cdot\mathbf{n})^2-c^2}{\xi-\mathbf{w}\cdot\mathbf{n}}\Big)(\tilde{e}_v)^2 \\
 + \frac{(c^2-\xi\mathbf{w}\cdot\mathbf{n})\mathbf{w}\cdot\mathbf{n}}{\xi-\mathbf{w}\cdot\mathbf{n}}(\nabla \tilde{e}_u\cdot\mathbf{n})\tilde{e}_v \Big] 
+ \sum_j\int_{B_{j_O}} \Big[ -\frac{c^2}{2} \arrowvert (\nabla \tilde{e}_u)_{\tau}\arrowvert^2\mathbf{w}\cdot\mathbf{n}   \\
  + (\frac{c^2}{2\xi}+\frac{\xi}{2})(\nabla \tilde{e}_u\cdot\mathbf{n})\tilde{e}_v\mathbf{w}\cdot\mathbf{n}
 -\frac{c^2\xi}{2}(\nabla \tilde{e}_u\cdot\mathbf{n})^2-\frac{c^2}{2\xi}(\tilde{e}_v)^2 \Big].  
\end{multline}

We now must choose $(\tilde{u}^h, \tilde{v}^h)$ to achieve an acceptable error estimate. In what follows we will assume for simplicity that $(u^h, v^h) = (\tilde{u}^h, \tilde{v}^h)$ at $t = 0$. We note that in the numerical experiments we found it beneficial to subtract off a function satisfying the initial conditions, thus solving a forced equation with zero initial data. On $\Omega_j$ we impose for all times $t$ and $(\phi_u, \phi_v)\in\mathcal{P}^{q,s,m}$,
\be \label{restriction_polynomials}
\int_{\Omega_j}\nabla\phi_u\cdot\nabla\delta_u = \int_{\Omega_j}\phi_v\delta_v = \int_{\Omega_j}\delta_u = 0 .
\ee
Then, integrating by parts, we derive the following expression for $\mathcal{B}(\tilde{\mathbf{D}}_0^h, \mathbf{\Delta}^h)$: 
\begin{multline*} 
\mathcal{B}(\tilde{\mathbf{D}}_0^h, \mathbf{\Delta}^h) = \sum_j\int_{\Omega_j} \Big[ c^2\nabla\tilde{e}_u\cdot\nabla(\frac{\partial \delta_u}{\partial t}) 
 -c^2\nabla^2\tilde{e}_u\mathbf{w}\cdot\nabla\delta_u 
  + c^2\nabla^2\tilde{e}_u\delta_v + \tilde{e}_v(\frac{\partial \delta_v}{\partial t})  \\
- \mathbf{w}\cdot\nabla\tilde{e}_v\delta_v 
+ c^2\nabla\tilde{e}_v\cdot\nabla\delta_u \Big]
- \sum_j \int_{\partial\Omega_j} \Big[ -c^2\nabla\tilde{e}_u\cdot\mathbf{n}\nabla\delta_u\cdot\mathbf{w}  
+ c^2\delta_v^*\nabla\tilde{e}_u\cdot\mathbf{n} \\
+ c^2\tilde{e}_v\nabla\delta_u^*\cdot\mathbf{n} 
- c^2\nabla\tilde{e}_u\cdot(\nabla\delta^*_u-\nabla\delta_u)\mathbf{w}\cdot\mathbf{n}
- \delta_v^*\tilde{e}_v\mathbf{w}\cdot\mathbf{n} \Big],
\end{multline*}
now  we can rewrite the volume integral $\int_{\Omega_j}\nabla^2\tilde{e}_u\mathbf{w}\cdot\nabla\delta_u$ as,
\bd 
\int_{\Omega_j}\nabla^2\tilde{e}_u\mathbf{w}\cdot\nabla\delta_u = \int_{\Omega_j}\big(\nabla(\mathbf{w}\cdot\nabla\tilde{e}_u)+\nabla\times(\mathbf{w}\times\nabla\tilde{e}_u)\big)\cdot\nabla\delta_u,
\ed
then invoking (\ref{restriction_polynomials}) the volume integrals in $\mathcal{B}(\tilde{\mathbf{D}}_0^h, \mathbf{\Delta}^h)$ will vanish and we can simplify $\mathcal{B}(\tilde{\mathbf{D}}_0^h, \mathbf{\Delta}^h)$ to
\begin{multline} \label{B_definition_2}
\mathcal{B}(\tilde{\mathbf{D}}_0^h, \mathbf{\Delta}^h) =
- \sum_j \int_{\partial\Omega_j}c^2\big((\mathbf{w}\times\nabla\tilde{e}_u)\times\nabla\delta_u\big)\cdot\mathbf{n}
-c^2\nabla\tilde{e}_u\cdot\mathbf{n}\nabla\delta_u\cdot\mathbf{w}\\
+ c^2\delta_v^*\nabla\tilde{e}_u\cdot\mathbf{n} + c^2\tilde{e}_v\nabla\delta_u^*\cdot\mathbf{n} 
-c^2\nabla\tilde{e}_u\cdot(\nabla\delta^*_u-\nabla\delta_u)\mathbf{w}\cdot\mathbf{n}
- \delta_v^*\tilde{e}_v\mathbf{w}\cdot\mathbf{n}\\
=- \sum_j \int_{\partial\Omega_j}c^2\big(-(\nabla\delta_u\cdot\nabla\tilde{e}_u)\mathbf{w}\cdot\mathbf{n}
+\nabla\tilde{e}_u\cdot\mathbf{n}\nabla\delta_u\cdot\mathbf{w}\big)
-c^2\nabla\tilde{e}_u\cdot\mathbf{n}\nabla\delta_u\cdot\mathbf{w} \\
+ c^2\delta_v^*\nabla\tilde{e}_u\cdot\mathbf{n} 
+ c^2\tilde{e}_v\nabla\delta_u^*\cdot\mathbf{n} 
-c^2\nabla\tilde{e}_u\cdot(\nabla\delta^*_u-\nabla\delta_u)\mathbf{w}\cdot\mathbf{n}- \delta_v^*\tilde{e}_v\mathbf{w}\cdot\mathbf{n}\\
=- \sum_j \int_{\partial\Omega_j}c^2\delta_v^*\nabla\tilde{e}_u\cdot\mathbf{n} - \delta_v^* \tilde{e}_v\mathbf{w}\cdot\mathbf{n}+ c^2\tilde{e}_v\nabla\delta_u^*\cdot\mathbf{n} - c^2\nabla\tilde{e}_u\cdot\nabla\delta_u^*\mathbf{w}\cdot\mathbf{n} .  
\end{multline}

Combining contributions from neighboring elements we then have 
\begin{multline*} 
\mathcal{B}(\tilde{\mathbf{D}}_0^h, \mathbf{\Delta}^h) 
= - \sum_j \int_{F_j} \Big[ c^2[[\nabla\tilde{e}_u]]\delta_v^* 
-[[\tilde{e}_v]]\delta_v^*\cdot\mathbf{w}
+ c^2[[\tilde{e}_v]]\cdot\nabla\delta_u^*\\
- c^2\nabla\tilde{e}_{u1}\cdot\nabla\delta_u^*\mathbf{w}\cdot\mathbf{n}_1
 - c^2\nabla\tilde{e}_{u2}\cdot\nabla\delta_u^*\mathbf{w}\cdot\mathbf{n}_2 \Big]\\
  - \sum_j \int_{B_j} \Big[ c^2\delta_v^*\nabla\tilde{e}_u\cdot\mathbf{n}
   - \delta_v^* \tilde{e}_v\mathbf{w}\cdot\mathbf{n}
   + c^2\tilde{e}_v\nabla\delta_u^*\cdot\mathbf{n} 
  - c^2\nabla\tilde{e}_u\cdot\nabla\delta_u^*\mathbf{w}\cdot\mathbf{n} \Big].
\end{multline*}
Here we have introduced the fluxes $\delta_v^*$, $\nabla\delta_u^*$ built from $\delta_v$, $\nabla\delta_u$ according to the specification in Section \ref{fluxes}. In what follows, $C$ will be a constant independent of the solution and the element diameter $h$ for a shape-regular mesh. Here $||\cdot||$ denotes a Sobolev norm and $|\cdot|$ denotes the associated seminorm. We then have the following error estimate.
\begin{theorem}
Let $\bar{q} = \min(q-1,s)$. Then there exist numbers $C_0$, $C_1$ depending only on $s$, $q$ and the shape-regularity of the mesh, such that for smooth solutions $u$, $v$ and time $T$
\begin{multline*}
||\nabla e_{u}(\cdot,T)||_{{L^2}(\Omega)}^2 + ||e_v(\cdot,T)||_{L^2(\Omega)}^2 \\
\leq (C_0T + C_1T^2)\max_{t\leq T}\Big[h^{2\theta}\big(|u(\cdot,t)|_{H^{\bar{q}+2}(\Omega)}^2 + |v(\cdot,t)|_{H^{\bar{q}+1}(\Omega)}^2\big)\Big],
\end{multline*}  
where
\begin{eqnarray*}
 \theta = \left\{
\begin{array}{ll}
\bar{q},   \ \ \ \ \ \ \ \  \ \ \  \beta,\eta\geq0, \ \ |{\bf w}\cdot{\bf n}| \leq \frac{2c^2\xi}{c^2+\xi^2},\\
\bar{q}+\frac{1}{2},\ \ \ \ \  \ \beta,\eta > 0, \ \ |{\bf w}\cdot{\bf n}|\leq \frac{2c^2\xi}{c^2+\xi^2}.
\end{array}
\right.
\end{eqnarray*}
\end{theorem}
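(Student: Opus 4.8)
The plan is to control $\mathcal{E}^h(T)$ via the differential inequality (\ref{derivative_error_energy}): the flux/boundary terms there are sign-definite (nonpositive) under the stated conditions on $\beta,\eta$ and $|\mathbf{w}\cdot\mathbf{n}|$ for the case $\theta=\bar q+\tfrac12$, so they may simply be dropped, leaving $\frac{d\mathcal{E}^h}{dt}\leq \mathcal{B}(\tilde{\mathbf{D}}_0^h,\mathbf{\Delta}^h)$; in the central-flux case $\theta=\bar q$ those terms are not all sign-definite and must instead be bounded above by a constant times $\mathcal{E}^h$ plus a consistency remainder using trace and inverse inequalities, which is the origin of the lost half power. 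The key object is then the consistency term $\mathcal{B}(\tilde{\mathbf{D}}_0^h,\mathbf{\Delta}^h)$, which by the computation (\ref{B_definition_2}) and its inter-element/boundary recombination is a sum of boundary integrals, bilinear in the "reachable" error $\tilde{\mathbf{D}}^h$ (through $\nabla\tilde e_u$, $\tilde e_v$ on $\partial\Omega_j$) and in the projection defect $\mathbf{\Delta}^h$ (through the fluxes $\delta_v^*,\nabla\delta_u^*$).

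First I would fix the comparison function $(\tilde u^h,\tilde v^h)$: take $\tilde v^h$ the elementwise $L^2$ projection of $v$ onto $(\Pi^s)^m$ and $\tilde u^h$ the elementwise $H^1$ (Ritz-type) projection of $u$ onto $(\Pi^q)^m$ with the mean also pinned, so that (\ref{restriction_polynomials}) holds exactly; standard approximation theory on shape-regular elements then gives $\|\delta_v\|_{L^2(\Omega_j)}\le Ch^{\bar q+1}|v|_{H^{\bar q+1}(\Omega_j)}$ and $\|\nabla\delta_u\|_{L^2(\Omega_j)}\le Ch^{\bar q+1}|u|_{H^{\bar q+2}(\Omega_j)}$ (recall $\bar q=\min(q-1,s)$ is exactly what makes both defects carry the same power), together with the matching trace bounds $\|\delta_v\|_{L^2(\partial\Omega_j)}\le Ch^{\bar q+1/2}|v|_{H^{\bar q+1}(\Omega_j)}$, $\|\nabla\delta_u\|_{L^2(\partial\Omega_j)}\le Ch^{\bar q+1/2}|u|_{H^{\bar q+2}(\Omega_j)}$. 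Since the fluxes $\delta_v^*,\nabla\delta_u^*$ are the same fixed linear combinations of one-sided traces of $\delta_v,\nabla\delta_u$ used in Section~\ref{fluxes}, they obey the same trace bounds. Next, in each boundary integral in $\mathcal{B}(\tilde{\mathbf{D}}_0^h,\mathbf{\Delta}^h)$ I apply Cauchy--Schwarz on the face followed by Young's inequality, absorbing the factors of $\nabla\tilde e_u$ and $\tilde e_v$: here a discrete trace (inverse) inequality $\|\nabla\tilde e_u\|_{L^2(\partial\Omega_j)}\le Ch^{-1/2}\|\nabla\tilde e_u\|_{L^2(\Omega_j)}$ and likewise for $\tilde e_v$ converts face norms of the polynomial error into volume norms, i.e. into $\sqrt{\mathcal{E}^h}$, at the cost of one power of $h^{-1/2}$. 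Combined with $h^{\bar q+1/2}$ from the defect traces, this yields $|\mathcal{B}(\tilde{\mathbf{D}}_0^h,\mathbf{\Delta}^h)|\le C h^{\bar q+1/2}\,h^{-1/2}\,\sqrt{\mathcal{E}^h}\cdot(\text{seminorms}) \le C h^{2\bar q}(\ldots)+ \tfrac12\mathcal{E}^h$ — wait: more carefully, for the upwind case the sign-definite flux terms let me keep one more half power, giving the bound $C h^{\bar q+1/2}(|u|_{H^{\bar q+2}}+|v|_{H^{\bar q+1}})\sqrt{\mathcal{E}^h}$, whereas in the central case the uncontrolled half of a face term must be paired with $\mathcal{E}^h$ itself and only $h^{\bar q}$ survives.

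Putting this together, in the upwind case one arrives at $\frac{d}{dt}\sqrt{\mathcal{E}^h}\le C h^{\bar q+1/2}\max_{t\le T}(|u|_{H^{\bar q+2}}+|v|_{H^{\bar q+1}})$, integrate in $t$ from $0$ to $T$ using $\mathcal{E}^h(0)=0$, square, and obtain the $T^2 h^{2\bar q+1}$ part (the $C_1T^2$ term); in the central case one first derives $\frac{d\mathcal{E}^h}{dt}\le C\mathcal{E}^h + C h^{2\bar q}(\ldots)^2 + Ch^{\bar q}(\ldots)\sqrt{\mathcal{E}^h}$ and applies Gronwall, producing the $C_0T+C_1T^2$ structure with $\theta=\bar q$. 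Finally convert from $\tilde{\mathbf{D}}^h$ to the true error: $\nabla e_u=\nabla\tilde e_u-\nabla\delta_u$, $e_v=\tilde e_v-\delta_v$, so by the triangle inequality $\|\nabla e_u(\cdot,T)\|_{L^2}^2+\|e_v(\cdot,T)\|_{L^2}^2\le 2\mathcal{E}^h(T)/\min(1,c^2)+2(\|\nabla\delta_u\|^2+\|\delta_v\|^2)$, and the last term is already $O(h^{2\bar q+2})$, hence absorbed. The main obstacle is the bookkeeping of the boundary terms: one must verify that after recombining the two neighbouring elements' contributions on each interior face $F_j$, every term in $\mathcal{B}(\tilde{\mathbf{D}}_0^h,\mathbf{\Delta}^h)$ genuinely pairs a defect-trace against a polynomial-error trace (so that no $\mathcal{E}^h$-uncontrolled product of two defect traces, which would be harmless, or two polynomial traces, which would be fatal, appears), and that the flux definitions of Section~\ref{fluxes} — especially the splitting into normal and tangential parts and the sign conditions (\ref{bound_alpha}), (\ref{bound_w_1}) — are exactly what make the genuinely negative face terms in (\ref{derivative_error_energy}) dominate the cross term, thereby saving the extra half power in the upwind case; the one-dimensional optimal-order refinement alluded to in the introduction would require in addition a superconvergence property of the projection at element endpoints, which is not available in the general multi-dimensional argument sketched here.
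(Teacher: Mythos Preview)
Your proposal has the mechanism of the two cases inverted, and this matters.

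Under the hypotheses of the theorem the flux and physical-boundary terms in (\ref{derivative_error_energy}) are nonpositive in \emph{both} cases; this is exactly the content of the stability analysis in Section~\ref{fluxes} (for the central flux they are identically zero on interior faces). Hence $\frac{d\mathcal{E}^h}{dt}\le \mathcal{B}(\tilde{\mathbf{D}}_0^h,\mathbf{\Delta}^h)$ holds in both cases, and no Gr\"onwall argument is ever needed. In fact your Gr\"onwall route would fail: if those face terms were genuinely not sign-definite you would have to bound quantities like $\|[[\nabla\tilde e_u]]\|_{L^2(F_j)}^2$ from above, and the inverse/trace inequality produces a factor $Ch^{-1}$ in front of $\mathcal{E}^h$, so Gr\"onwall would yield $e^{CT/h}$, not $C_0T+C_1T^2$.

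The actual distinction is how $\mathcal{B}(\tilde{\mathbf{D}}_0^h,\mathbf{\Delta}^h)$ is bounded. In the general case $\beta,\eta\ge 0$ (the $\theta=\bar q$ estimate) the paper drops the nonpositive flux terms and bounds $\mathcal{B}$ directly: Cauchy--Schwarz on each face, then the inverse trace inequality on the polynomial factors $\nabla\tilde e_u,\tilde e_v$, giving $\mathcal{B}\le C h^{\bar q}\sqrt{\mathcal{E}^h}(|u|_{H^{\bar q+2}}+|v|_{H^{\bar q+1}})$; the $h^{-1/2}$ lost to the inverse inequality against the $h^{\bar q+1/2}$ from the defect traces is the source of the missing half power. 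Integration then gives the $T^2h^{2\bar q}$ bound. In the dissipative case $\beta,\eta>0$ the paper does \emph{not} drop the flux terms: it keeps the strictly negative jump-squared terms together with $\mathcal{B}$ (your $\Theta_1,\Theta_2,\Theta_3$), applies Young's inequality to each cross term so that the polynomial-error jumps are absorbed by the negative squares, and is left with only defect traces squared, i.e.\ $\frac{d\mathcal{E}^h}{dt}\le C h^{2\bar q+1}(\ldots)^2$ with no $\sqrt{\mathcal{E}^h}$ factor and no inverse inequality. Integrating gives $Th^{2\bar q+1}$. So in the upwind case the flux terms are the mechanism that \emph{saves} the half power, not something to be discarded; your sentence ``so they may simply be dropped'' is precisely the step that would forfeit the improved rate.
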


\begin{proof}
From the Bramble-Hilbert lemma (e.g. \ci{Ciarlet}), we have for $\bar{q}=\min (q-1,s)$ 
\begin{align*} 
 \| \delta_v \|_{L^2(\Omega)}^2 + \|\nabla\delta_u\|_{L^2(\Omega)}^2
 &\leq Ch^{2\bar{q}+2}\left(|u(\cdot,t)|_{H^{\bar{q}+2}}^2 + |v(\cdot,t)|_{H^{\bar{q}+1}(\Omega)}^2\right),\\
 \|\frac{\partial\delta_v}{\partial t}\|_{L^2(\Omega)}^2 
& \leq  Ch^{2s+2} \left| \frac{\partial v(\cdot,t)}{\partial t} \right|_{H^{s+1}(\Omega)}^2,\\
 \|\delta_v^*\|_{L^2(\partial\Omega)}^2 + \|\nabla\delta_{u}^*\cdot\mathbf{n} \|_{L^2(\partial\Omega)}^2 
 & \leq  Ch^{2\bar{q}+1}\left(|u(\cdot,t)|_{H^{\bar{q}+2}(\Omega)}^2 + |v(\cdot,t)|_{H^{\bar{q}+1}(\Omega)}^2\right), \\
 \|\tilde{e}_v\|_{L^2(\partial\Omega)}^2 + \|\nabla\tilde{e}_u\cdot\mathbf{n}\|_{L^2(\partial\Omega)}^2 
 & \leq  Ch^{-1}\left(\|\tilde{e}_v\|_{L^2(\Omega)}^2 + \|\nabla\tilde{e}_u\|_{L^2(\Omega)}^2\right). 
\end{align*}
First, consider the case where $\beta = 0$ or $\eta = 0$. From the condition (\ref{bound_w_1}) we conclude that the time derivative of the error energy satisfies 
\bd
\frac{d\mathcal{E}^h}{dt} \leq \mathcal{B}(\tilde{\mathbf{D}}_0^h, \mathbf{\Delta}^h).
\ed
 Now, by the Cauchy-Schwarz inequality we have 
\begin{multline*} 
\mathcal{B}(\tilde{\mathbf{D}}_0^h, \mathbf{\Delta}^h)  
\leq  C\sum_{j}\|\nabla\tilde{e}_u\cdot\mathbf{n}\|_{L^2(\partial\Omega_j)}\|\delta^*_v\|_{L^2(\partial\Omega_j)} 
+\|\tilde{e}_v\|_{L^2(\partial\Omega_j)}\|\nabla\delta^*_u\cdot \mathbf{n}\|_{L^2(\partial\Omega_j)}\\
+\|\nabla\tilde{e}_u\|_{L^2(\partial\Omega_j)}\|\nabla\delta^*_u\|_{L^2(\partial\Omega_j)} 
+\|\delta^*_v\|_{L^2(\partial\Omega_j)}\|\tilde{e}_v\|_{L^2(\partial\Omega_j)} \\
\leq C\sqrt{\mathcal{E}^h}h^{\bar{q}}(|u(\cdot,t)|_{H^{\bar{q}+2}(\Omega)}
+|v(\cdot,t)|_{H^{\bar{q}+1}(\Omega)}).  
\end{multline*}
Then a direct integration in time combined with the assumption that $(\tilde{e}_u, \tilde{e}_v) = 0$ at $t = 0$ gives us
\bd 
\mathcal{E}^h(T) \leq CT^2 \max_{t\leq T} h^{2\bar{q}}\left(|u(\cdot,t)|_{H^{\bar{q}+2}(\Omega)}^2+|v(\cdot,t)|_{H^{\bar{q}+1}(\Omega)}^2\right).
\ed 
For dissipative fluxes, $\beta$, $\eta > 0$, we can improve the estimates. For the boundaries of the inter-elements the contribution is
\begin{multline*}
\Theta_1 = -\sum_j \int_{F_j} \Big[ c^2[[\nabla\tilde{e}_u]]\delta_v^* 
 -[[\tilde{e}_v]]\delta_v^*\cdot\mathbf{w}
 + c^2[[\tilde{e}_v]]\cdot\nabla\delta_u^*
 - c^2\nabla\tilde{e}_{u1}\cdot\nabla\delta_u^*\mathbf{w}\cdot\mathbf{n}_1   \\
   - c^2\nabla\tilde{e}_{u2}\cdot\nabla\delta_u^*\mathbf{w}\cdot\mathbf{n}_2 \Big] 
  - \sum_j\int_{F_j} \Big[ c^{2}\eta[[\nabla\tilde{e}_u]]^2 
  + c^{2}\beta\big|[[\tilde{e}_v]]\big|^2  \\
   -c^{2}\beta[[\tilde{e}_v]]\cdot\big(\nabla\tilde{e}_{u1} 
(\mathbf{w}\cdot\mathbf{n}_1)
+\nabla \tilde{e}_{u2}(\mathbf{w}\cdot\mathbf{n}_2)\big)
-\eta [[\nabla\tilde{e}_u]][[\tilde{e}_v]]\cdot\mathbf{w} \Big], 
\end{multline*}
resulting in
\begin{multline} \label{inequality_12}
\Theta_1 \leq  C\sum_j \Big(\|\delta_v^*\|_{L^2(F_j)}^2 + \|\nabla\delta_u^*\|_{L^2(F_j)}^2\Big)  \\
 \leq  Ch^{2\bar{q}+1}\Big(|u(\cdot,t)|_{H^{\bar{q}+2}(\Omega)}^2 + |v(\cdot,t)|_{H^{\bar{q}+2}(\Omega)}^2\Big),
\end{multline}
on the physical boundaries, by using the fact 
\[
\nabla \tilde{e}_u\cdot\nabla\delta_u^* = (\nabla\tilde{e}_u\cdot{\bf n})(\nabla\delta_u^*\cdot{\bf n}) + (\nabla\tilde{e}_u)_\tau\cdot(\nabla\delta_u^*)_\tau,
\]
at inflow we have
\begin{multline*} 
\Theta_2 
 = - \sum_j \int_{B_{j_I}} \Big[ c^2\delta_v^*\nabla\tilde{e}_u\cdot\mathbf{n} 
 - \delta_v^* \tilde{e}_v\mathbf{w}\cdot\mathbf{n}
 + c^2\tilde{e}_v\nabla\delta_u^*\cdot\mathbf{n} 
 - c^2(\nabla\tilde{e}_u\cdot\mathbf{n})(\nabla\delta_u^*\cdot\mathbf{n})\mathbf{w}\cdot\mathbf{n}   \\
  -c^2(\nabla\tilde{e}_u)_{\tau} \cdot (\nabla\delta_u^*)_{\tau} \mathbf{w}\cdot\mathbf{n} \Big] 
  +\sum_j\int_{B_{j_I}} \Big[ \frac{c^2\mathbf{w}\cdot\mathbf{n}}{2} \arrowvert (\nabla\tilde{e}_u)_{\tau}\arrowvert^2 
+ \frac{c^2\mathbf{w}\cdot\mathbf{n}}{2}(\nabla\tilde{e}_u\cdot\mathbf{n})^2  \\
  + \Big(\frac{\mathbf{w}\cdot\mathbf{n}}{2}
  +\frac{(\mathbf{w}\cdot\mathbf{n})^2-c^2}{\xi-\mathbf{w}\cdot\mathbf{n}}\Big) (\tilde{e}_v)^2
  + \frac{(c^2-\xi\mathbf{w}\cdot\mathbf{n})\mathbf{w}\cdot\mathbf{n}}{\xi-\mathbf{w}\cdot\mathbf{n}}(\nabla \tilde{e}_u\cdot\mathbf{n})\tilde{e}_v \Big]. 
  \end{multline*}
Thus
\bd
\Theta _2 
 \leq  C\sum_j \Big(\|\delta_v^*\|_{L^2(B_{j_I})}^2 
+ \|\nabla\delta_u^*\|_{L^2(B_{j_I})}^2\Big)
 \leq  Ch^{2\bar{q}+1}\Big(|u(\cdot,t)|_{H^{\bar{q}+2}(\Omega)}^2 
+ |v(\cdot,t)|_{H^{\bar{q}+2}(\Omega)}^2\Big). 
\ed

Similarly, on the outflow boundaries we obtain that
\begin{multline*} 
\Theta_3 
  =  - \sum_j \int_{B_{j_O}} \Big[ c^2\delta_v^*\nabla\tilde{e}_u\cdot\mathbf{n} 
  - \delta_v^* \tilde{e}_v\mathbf{w}\cdot\mathbf{n}
  + c^2\tilde{e}_v\nabla\delta_u^*\cdot\mathbf{n} 
  - c^2(\nabla\tilde{e}_u\cdot\mathbf{n})(\nabla\delta_u^*\cdot\mathbf{n})\mathbf{w}\cdot\mathbf{n}   \\
  - c^2(\nabla\tilde{e}_u)_{\tau} \cdot (\nabla\delta_u^*)_{\tau} \mathbf{w}\cdot\mathbf{n} \Big] 
 + \sum_j\int_{B_{j_O}} \Big[ -\frac{c^2}{2} \arrowvert (\nabla \tilde{e}_u)_{\tau} \arrowvert^2\mathbf{w}\cdot\mathbf{n}   \\
  +(\frac{c^2}{2\xi}+\frac{\xi}{2}) (\nabla \tilde{e}_u\cdot\mathbf{n})\tilde{e}_v\mathbf{w}\cdot\mathbf{n}
 - \frac{c^2\xi}{2}(\nabla \tilde{e}_u\cdot\mathbf{n})^2
 - \frac{c^2}{2\xi}(\tilde{e}_v)^2 \Big].
 \end{multline*}
Then
\begin{multline} \label{inequality_14}
\Theta_3  \leq  C\sum_j \Big[ \|\delta_v^*\|_{L^2(B_{j_O})}^2 
 + \|\nabla\delta_u^*\|_{L^2(B_{j_O})}^2\Big] \\
 \leq  Ch^{2\bar{q}+1}\Big[|u(\cdot,t)|_{H^{\bar{q}+2}(\Omega)}^2 
 + |v(\cdot,t)|_{H^{\bar{q}+2}(\Omega)}^2\Big].
\end{multline}
Combining (\ref{derivative_error_energy}) with (\ref{inequality_12})-(\ref{inequality_14}) yields 
\bd
\mathcal{E}^h(T) \leq CT \max_{t\leq T} h^{2\bar{q}+1}(|u(\cdot,t)|_{H^{\bar{q}+2}(\Omega)}^2+|v(\cdot,t)|_{H^{\bar{q}+1}(\Omega)}^2).
\ed
\end{proof}

\begin{rem}
A similar analysis yields the same results in the presence of supersonic boundaries, $\arrowvert \mathbf{w} \cdot \mathbf{n} \arrowvert > c$.
\end{rem}

\subsection{Improved estimates for $m = 1$}
We can improve this estimate if we only consider the 1d case. Now assume $s = q-1$ and seek $(\tilde{u}^h, \tilde{v}^h)$ such that the boundary terms in $\mathcal{B}(\tilde{\mathbf{D}}_0^h, \mathbf{\Delta}^h)$ vanish:
\be \label{fluxes_1d}
\delta_v^* = \frac{\partial\delta_u^*}{\partial x} = 0.
\ee
This can be accomplished if we enforce the boundary condition on the end points of the element $\Omega_j = (x_{j-1}, x_j)$
\begin{eqnarray} \label{left_bc}
(1+\beta-\alpha)\delta_{v} + (\eta+\alpha)\frac{\partial \delta_{u}}{\partial x}= 0, &\ \ &    x = x_{j-1}, \\
(\beta+\alpha)\delta_{v}  - (1+\eta-\alpha)\frac{\partial\delta_{u}}{\partial x} = 0,  & \ \ &  x = x_{j}.  \label{right_bc}
\end{eqnarray}
As shown in \ci{DGwave}, we find we must assume
\bd 
\alpha(1-\alpha) = \beta\eta.
\ed 
This will be satisfied by the Sommerfeld flux but it does not hold for the central flux.  
Given (\ref{left_bc}) we construct $\delta_u$ and $\delta_v$ by requiring
\be  \label{restriction_polynomials_1d}
\int_{x_{j-1}}^{x_j}\phi\frac{\partial\delta_u}{\partial x} = \int_{x_{j-1}}^{x_j}\phi\delta_v = \int_{x_{j-1}}^{x_j}\delta_u = 0,
\ee
where $\phi$ is an arbitrary polynomial of degree $q-2$. Using the Bramble-Hilbert lemma, for $(u,v)\in H^{q+2}(\Omega)\times H^{q+1}(\Omega)$ we have the following inequality
\begin{eqnarray} \label{lemma_bound}
\Big|\Big|\frac{\partial\delta_u}{\partial t}\Big|\Big|_{H^1(\Omega)} + \Big|\Big|\frac{\partial\delta_v}{\partial t}\Big|\Big|_{L^2(\Omega)}\leq Ch^{q}\Big(\Big|\frac{\partial u}{\partial t}\Big|_{H^{q+1}(\Omega)}+\Big|\frac{\partial v}{\partial t}\Big|_{H^{q}(\Omega)}\Big).
\end{eqnarray}
Now, repeating the computations from the previous section and invoking
(\ref{fluxes_1d}) and (\ref{restriction_polynomials_1d}) yields
\bd 
\mathcal{B}(\tilde{\mathbf{D}}_0^h, \mathbf{\Delta}^h) = \sum_j\int_{x_{j-1}}^{x_j} c^2\frac{\partial\tilde{e}_u}{\partial x}\frac{\partial}{\partial x}\left(\frac{\partial \delta_u}{\partial t}\right)  + \tilde{e}_v\frac{\partial \delta_v}{\partial t}.
\ed 
Then (\ref{lemma_bound}) gives us the improved estimate
\bd
\frac{d\mathcal{E}^h}{dt}\leq Ch^{q}\sqrt{\mathcal{E}^h}\Big(|u(\cdot,t)|_{H^{q+2}(\Omega)}^2 + |v(\cdot,t)|_{H^{q+1}(\Omega)}^2\Big)^{1/2}.
\ed

\section{Numerical experiments}\label{experiments}
In this section, we present some numerical results
to study the convergence in the $L_2$ norm for our method. In the experiments
we add a forcing term, f, to the equations. Such a term could be incorporated
into the previous analysis without changing the results. In all cases we used a standard
modal formulation with a tensor-product Legendre basis and marched in time
using the 4-stage fourth order Runge-Kutta scheme (RK4).

For the experiments we choose a time step sufficiently small to make the errors
due to the spatial discretization dominate. We note that a study of the
spectrum of the spatial discretization establishes that its spectral radius
scales with $(c+\arrowvert \mathbf{w} \arrowvert) \frac {q^2}{h}$, with some variability depending on whether $q$ is even or odd. This is comparable to
what was found in the case of the scalar wave equation \ci{DGwave}.


\subsection{Periodic boundary conditions in one space dimension}
To investigate the order of accuracy of our methods, we solve
\bd
u_{tt} + 2wu_{tx} + w^{2}u_{xx} = c^{2}u_{xx}, \ \ x\in (0, 1), \ \ t \geq 0,
\ed
with the initial condition
\bd %
u(x, 0) = \sin(2\pi x), \ \ x\in(0, 1),
\ed
and periodic boundary condition $u(0, t) = u(1, t)$ for $t\geq 0$. This problem has the exact solution which is a traveling wave
\bd 
u(x, t) = \cos(2c\pi t)\sin(2\pi(x-wt)),\ \ t\geq 0.
\ed

The discretization is performed on a uniform mesh with element vertices $x_{i} = ih$, $i = 0,\ldots, n$, $h = 1/n$. We evolve the solution until $T = 0.4$ with time step  $\Delta t = {\rm CFL}\times h$ for the degree of approximation polynomials $q = (1,2,3,4,5,6)$. We present the $L^{2}$-error for both $u^h$ and $v^h$.  

In the numerical experiments, we test two different fluxes: the central flux and the upwind flux. We present three different cases: $|w| = c$,  $|w| < c$, and $|w| > c$. These choices are consistent with our theory. Note that if $|w| > c$ the upwind flux is taken from a single element. 

We also consider two different choices for the degrees of the approximation spaces: either the approximation degree of $v^h$ is one less than the approximation degree of $u^h$ or $u^h$ and $v^h$ are in the same space. To remove the effect of the temporal error we set,  for the central flux, ${\rm CFL} = 0.075/(2\pi)$ when $q = (1,2,3,4,5)$ and ${\rm CFL} = 0.00375/(2\pi)$ when $q = 6$. For the upwind flux with $|w| < c$, we set ${\rm CFL} = 0.1125/(2\pi)$ when $q = (1,2,3,4,5)$, and ${\rm CFL} = 0.01125/(2\pi)$ when $q = 6$. Finally, for the upwind flux with $|w|>c$, we set ${\rm CFL} = 0.075/(2\pi)$ when $q = (1,2,3,4,5)$, and ${\rm CFL} = 0.0075/(2\pi)$ when $q = 6$.

In our initial numerical experiments we found that the convergence was somewhat irregular in
all cases when we used $L^2$-projection to determine the initial conditions. Possibly this could be remedied for the upwind flux by using the special projection required by the analysis, see for example the approach in \ci{ChouShuXing2014} which discusses a projection for the LDG method with alternating fluxes. Here we propose a simpler solution which is to transform the problem
to one with zero initial data:
\bd 
    u(x,t) = \tilde{u}(x,t)+u_{0}(x)e^{-t^2},
\ed
where $u_{0}(x)$ is the initial condition for $u(x)$, and then numerically solve for $\tilde{u}$.

\begin{figure}[tbhp]
\begin{center}
\subfloat[$w = 0.5, c = 1$]{\label{fig:a1}\includegraphics[width=0.5\textwidth]{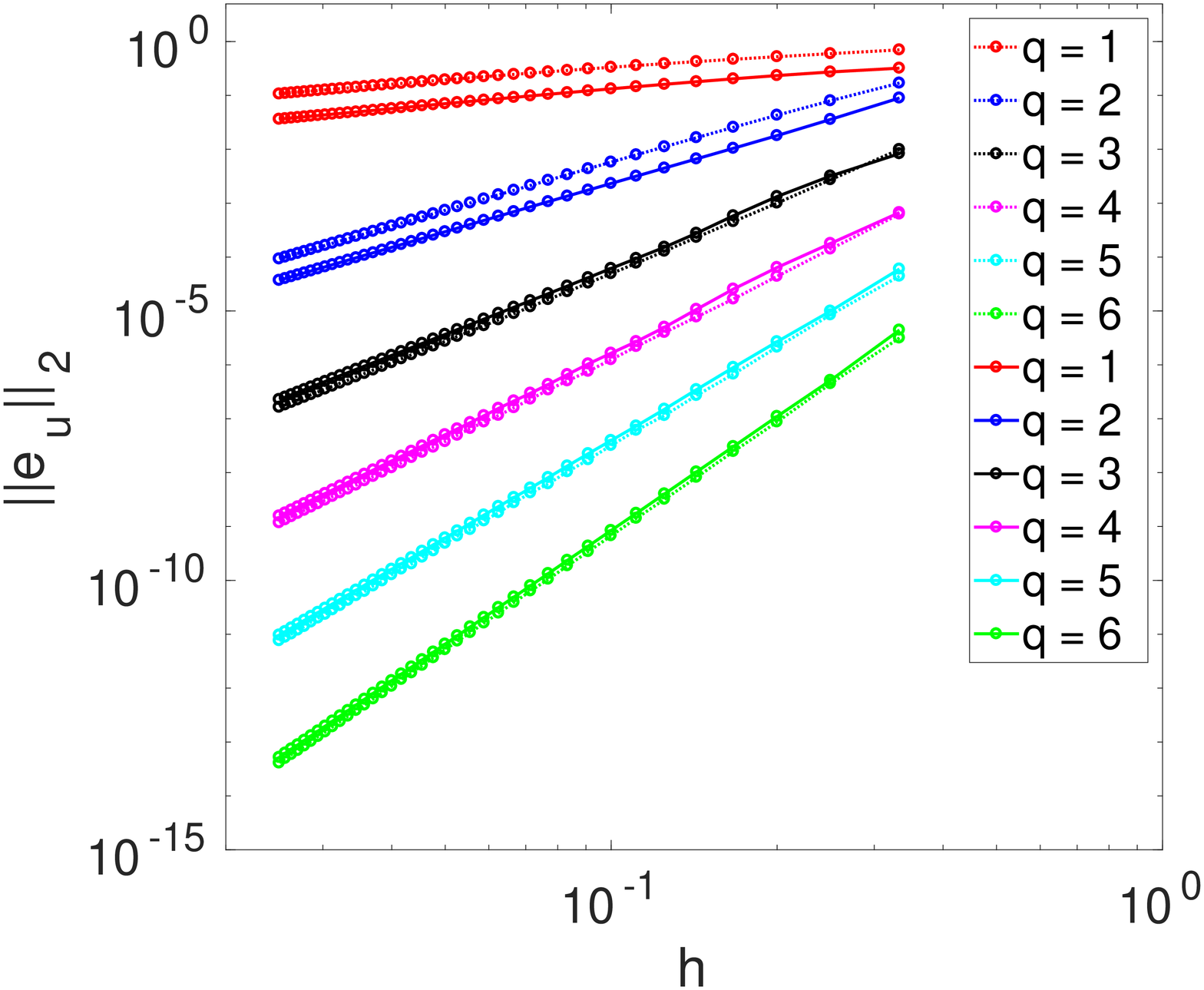}}
\subfloat[$w = 0.5, c = 1$]{\label{fig:d1}\includegraphics[width=0.5\textwidth]{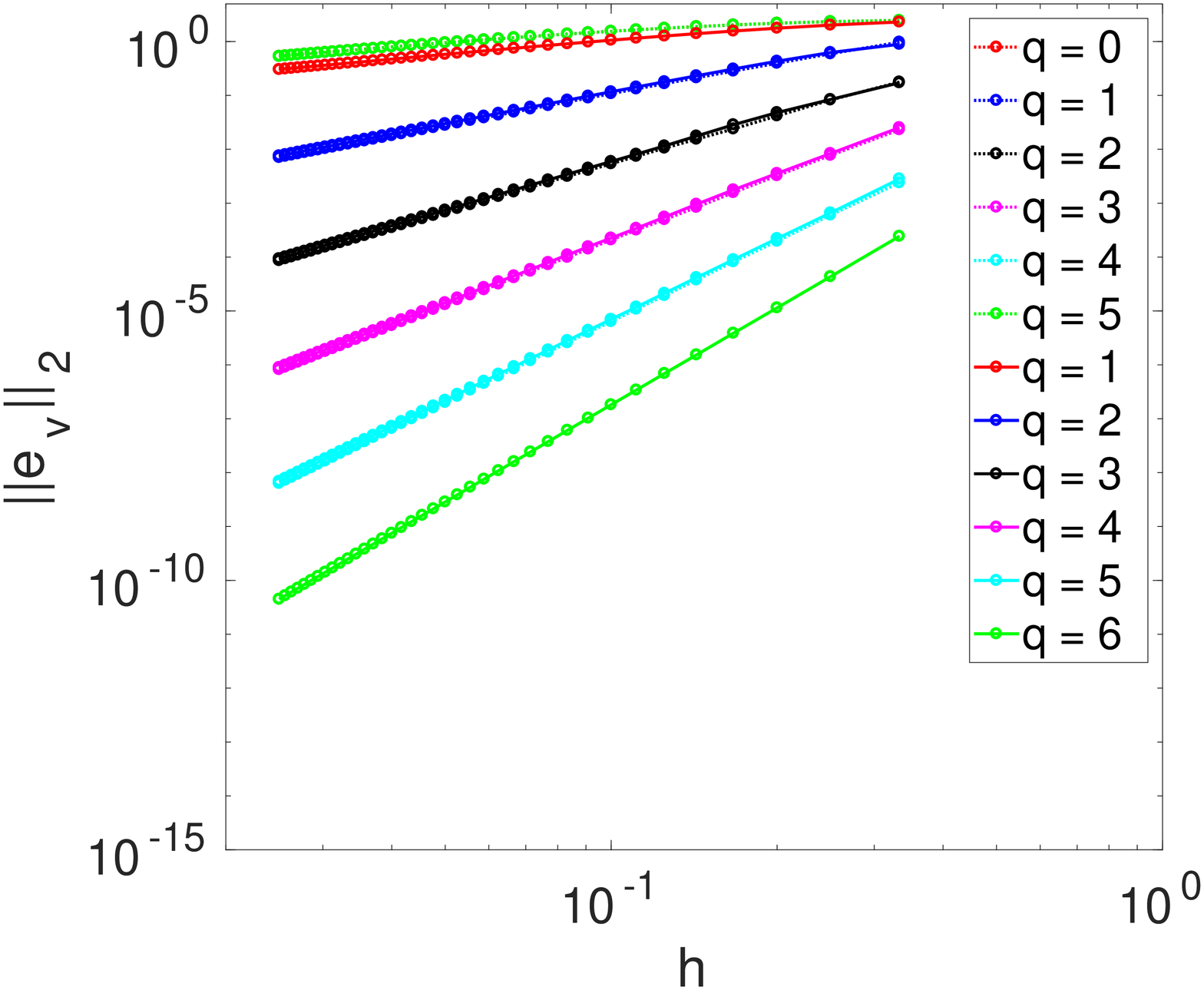}}\\
\subfloat[$w = 1, c = 0.5$]{\label{fig:c1}\includegraphics[width=0.5\textwidth]{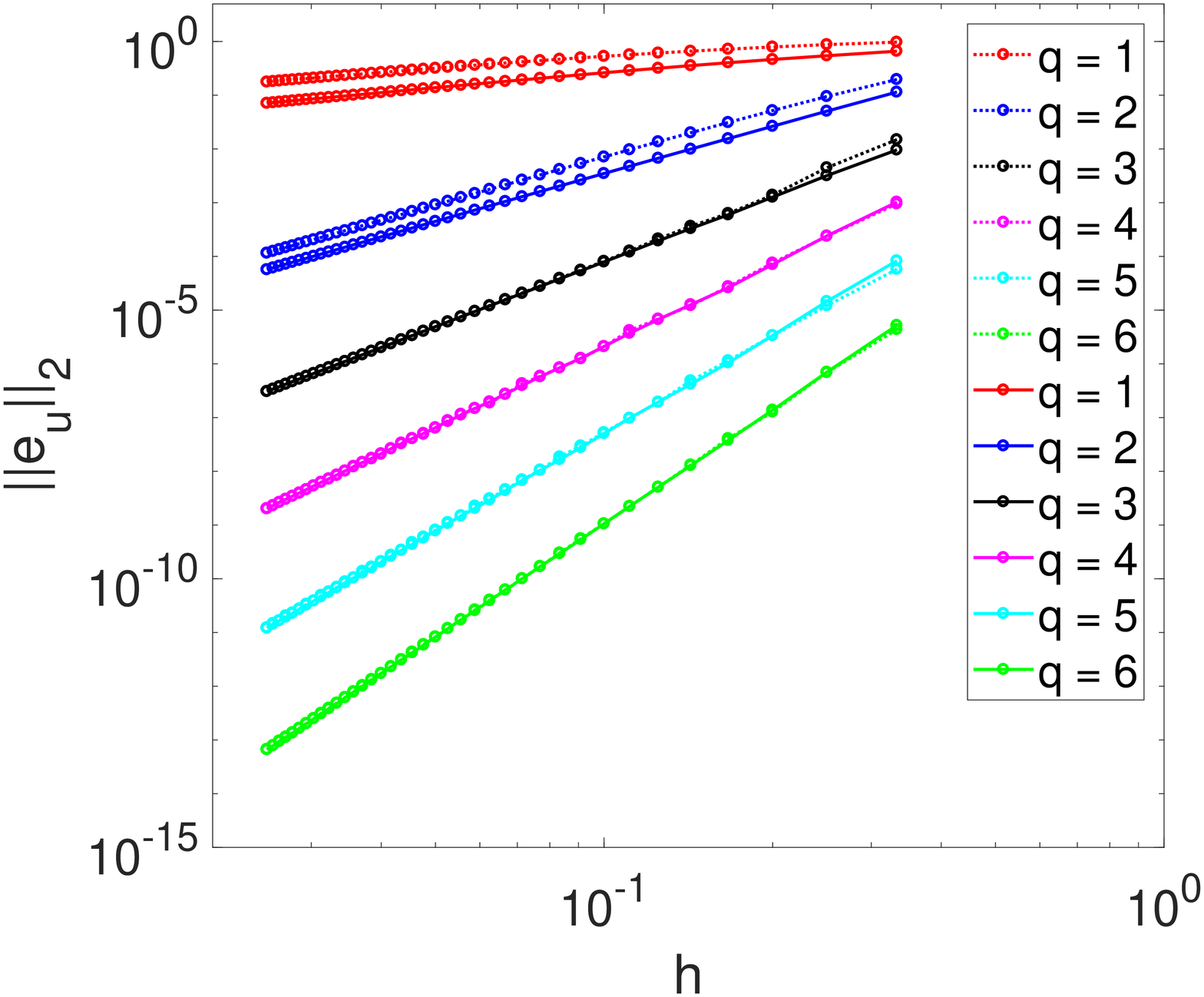}}
\subfloat[$w = 1, c = 0.5$]{\label{fig:f1}\includegraphics[width=0.5\textwidth]{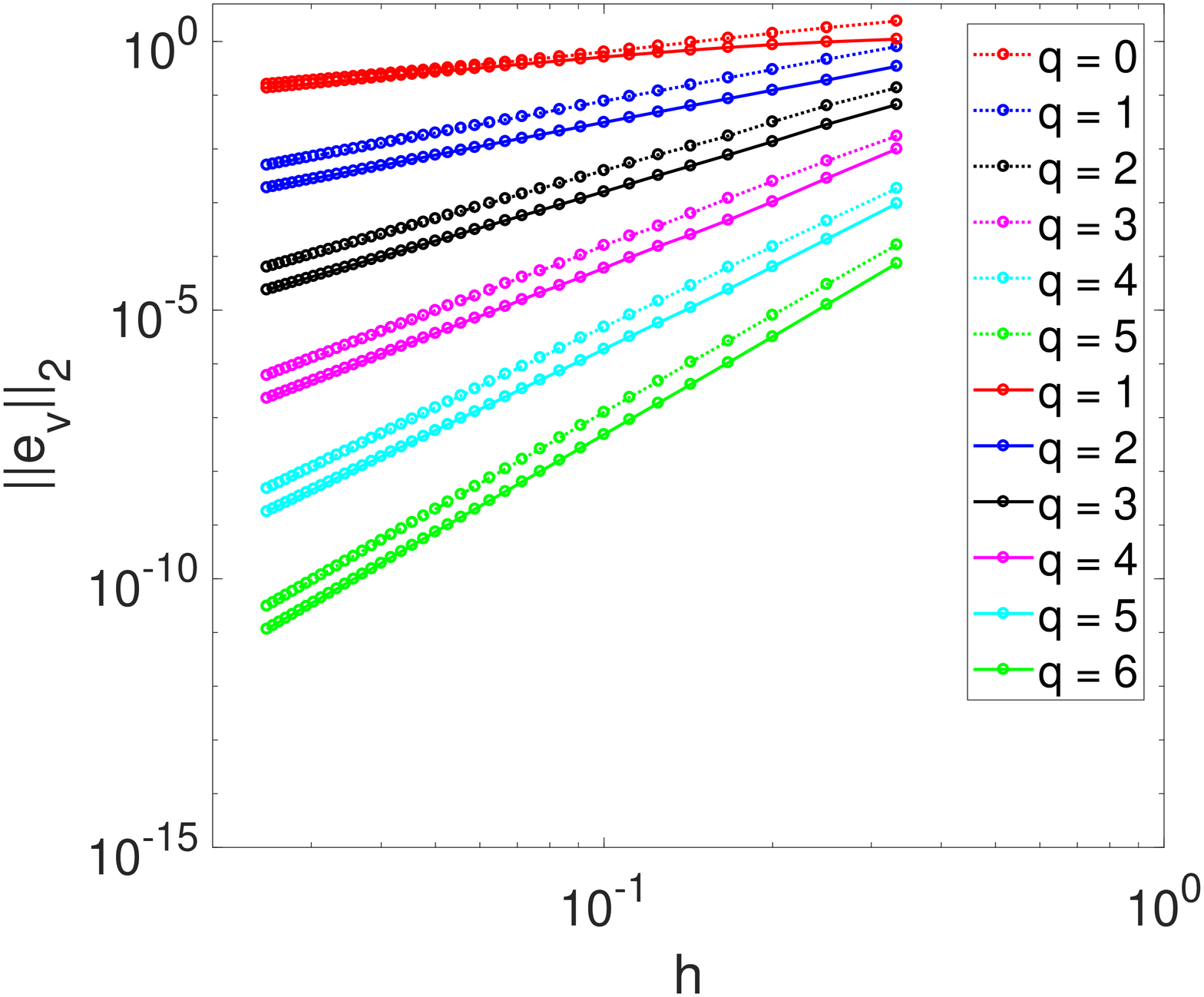}}\\
\subfloat[$w = c = 0.5$]{\label{fig:b1}\includegraphics[width=0.5\textwidth]{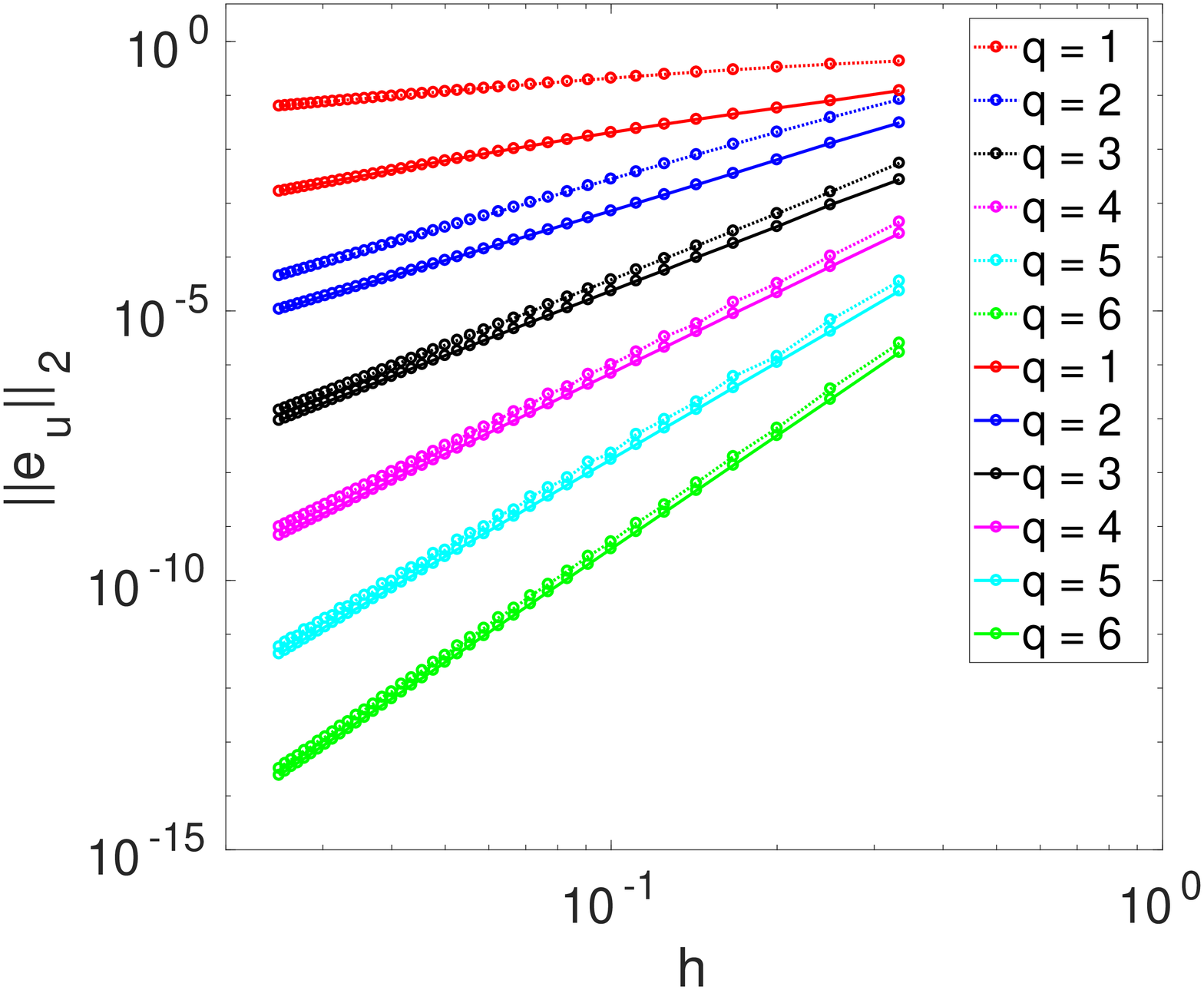}}
\subfloat[$w = c = 0.5$]{\label{fig:e1}\includegraphics[width=0.5\textwidth]{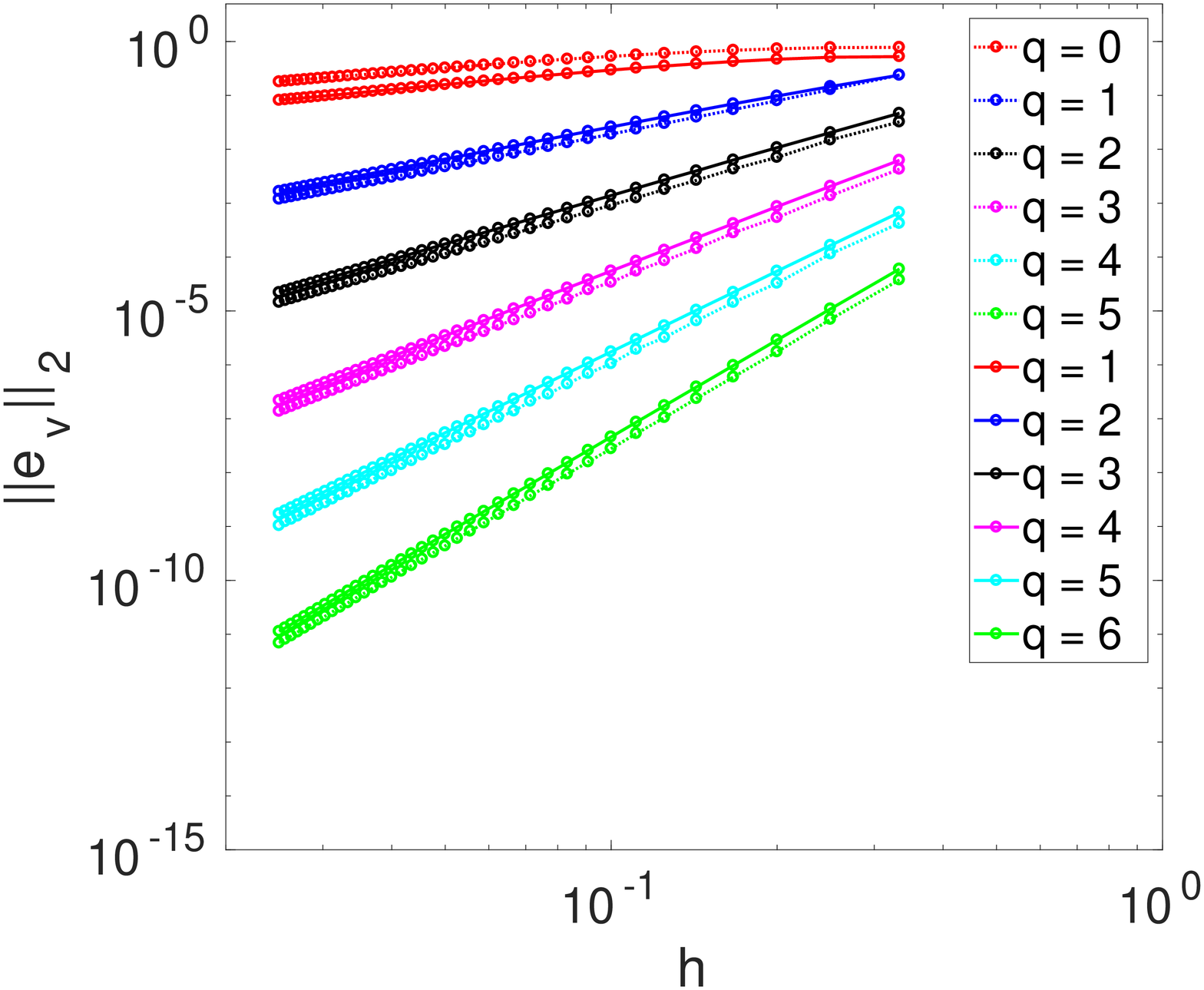}}
\caption{Plots of the error in $u$ (left column) and $v$
(right column) as a function of $h$ in 1d with upwind flux for periodic boundary condition. In the legend, $q$ is the maximum degree of the approximation of $u$, solid lines represent the case of $u^h$ and $v^h$ in the same space, dotted lines represent the case of $v^h$ one degree lower than $u^h$.}
\label{fig:uv_up_1d}
\end{center}
\end{figure}

The $L^2$ error for $u$ and $v$ are plotted against the grid spacing $h$ in Figure \ref{fig:uv_up_1d} for both $u^h$ and $v^h$ when the upwind flux is used.
Linear regression estimates of the rate of convergence, for $u^h$ and $v^h$ in the same polynomial space, can be found in Table \ref{R_up_per_1d_s}, and  for the degree of $v^h$
one less than that of $u^h$, in Table \ref{R_up_per_1d_d}. Note that we only use the ten finest grids to obtain the rates of convergence. 

For $q \geq 2$ we observe the same rate of convergence, $q+1$ for $u$ and $q$ for $v$, for the two choices of approximation space for $v$. However, from the graphs we see that there are sometimes noticeable differences in accuracy. Generally speaking, errors are smaller when $v^h$ is taken from the same space as $u^h$, the only exception being the
errors in approximating $v$ for the rather special case of $w=c$. 
\begin{table}[htb]
\caption{Linear regression estimates of the convergence rate of $u$ and $v$ in $1d$ with upwind flux for periodic boundary condition, approximation for $v$ is one degree lower than $u$.\label{R_up_per_1d_d}}
\begin{center}
  \begin{tabular}{|l|c c c c c c|}
  \hline
Degree $(q)$ of approx. for $u$   & 1 & 2 & 3 & 4 & 5 & 6 \\
\hline
  Rate fit $u$ $  (w = 0.5, c = 1)$ &0.90 & 3.00 & 4.05& 5.03 & 5.92 & 6.91 \\
  Rate fit $v$ $  (w = 0.5, c = 1)$ &0.87 & 1.99 & 2.99 & 3.99 & 5.00 & 6.00 \\
\hline
 Rate fit $u$ $ (w = 0.5, c = 0.5)$ &0.92& 3.00 & 4.01 & 5.00 & 6.14 & 7.00 \\
 Rate fit $v$ $ (w = 0.5, c = 0.5)$ &0.88 & 2.00 & 3.00 & 4.00 & 5.06 & 5.99  \\
  \hline 
   Rate fit $u$ $ (w = 1, c = 0.5)$ &0.88& 2.99 & 4.01 & 5.03 & 6.04 & 6.93 \\
  Rate fit $v$ $ (w = 1, c = 0.5)$ &0.93 & 1.99 & 2.99 & 3.99 & 5.00 & 6.00  \\
  \hline
\end{tabular}
\end{center}
\end{table}

\begin{table}[htb]
\caption{Linear regression estimates of the convergence rate of $u$ and $v$ in $1d$ with upwind flux for periodic boundary condition, $u$ and $v$ are in the same approximation space.
\label{R_up_per_1d_s}}
\begin{center}
  \begin{tabular}{|l|c c c c c c|}
  \hline
Degree $(q)$ of approx. of $u$   & 1 & 2 & 3 & 4 & 5 & 6 \\
\hline
  Rate fit $u$ $ (w = 0.5, c = 1)$ &0.97& 3.00 & 4.01& 5.00 & 5.98 & 6.95 \\
  Rate fit $v$ $ (w = 0.5, c = 1) $&0.95 & 1.99 & 3.00 & 4.00 & 5.00 & 6.00 \\
\hline
 Rate fit $u$ $ (w = 0.5, c = 0.5)$ &1.91& 3.01 & 4.00 & 5.00 & 6.00 & 6.89 \\
 Rate fit $v$ $ (w = 0.5, c = 0.5)$ &0.98 & 2.00 & 3.00 & 4.00 & 5.00 & 6.00  \\
 \hline
  Rate fit $u$ $ (w = 1, c = 0.5)$ & 0.97& 2.99 & 4.00 & 5.01 & 5.99 & 6.90 \\
  Rate fit $v$ $ (w = 1, c = 0.5)$ & 0.99 & 2.02 & 3.02 & 4.01 & 5.00 & 6.01  \\
\hline
\end{tabular}
\end{center}
\end{table}

\begin{figure}[tbhp]
\begin{center}
\subfloat[$w = 0.5, c = 1$]{\label{fig:a1c}\includegraphics[width=0.5\textwidth]{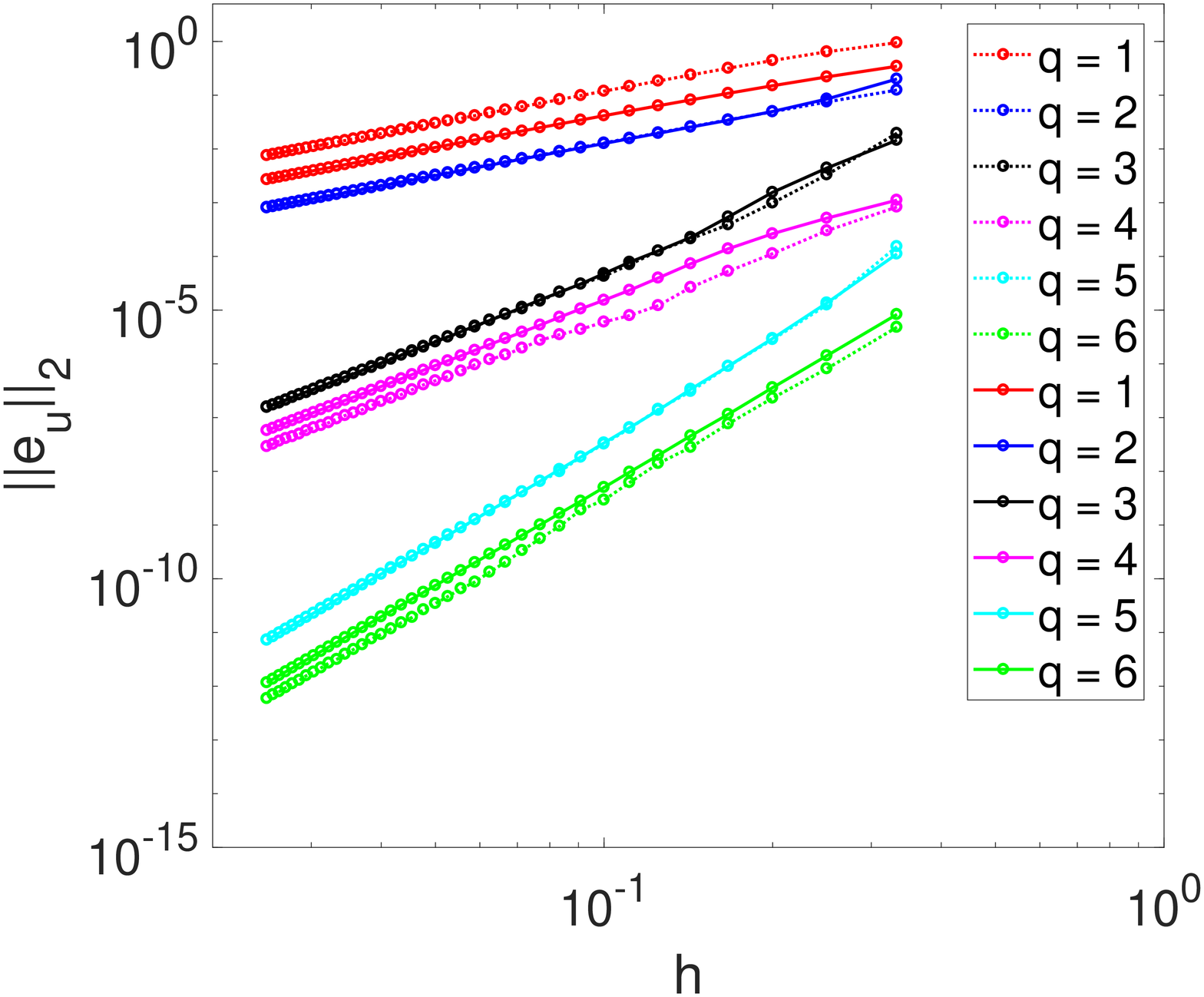}}
\subfloat[$w = 0.5, c = 1$]{\label{fig:va1c}\includegraphics[width=0.5\textwidth]{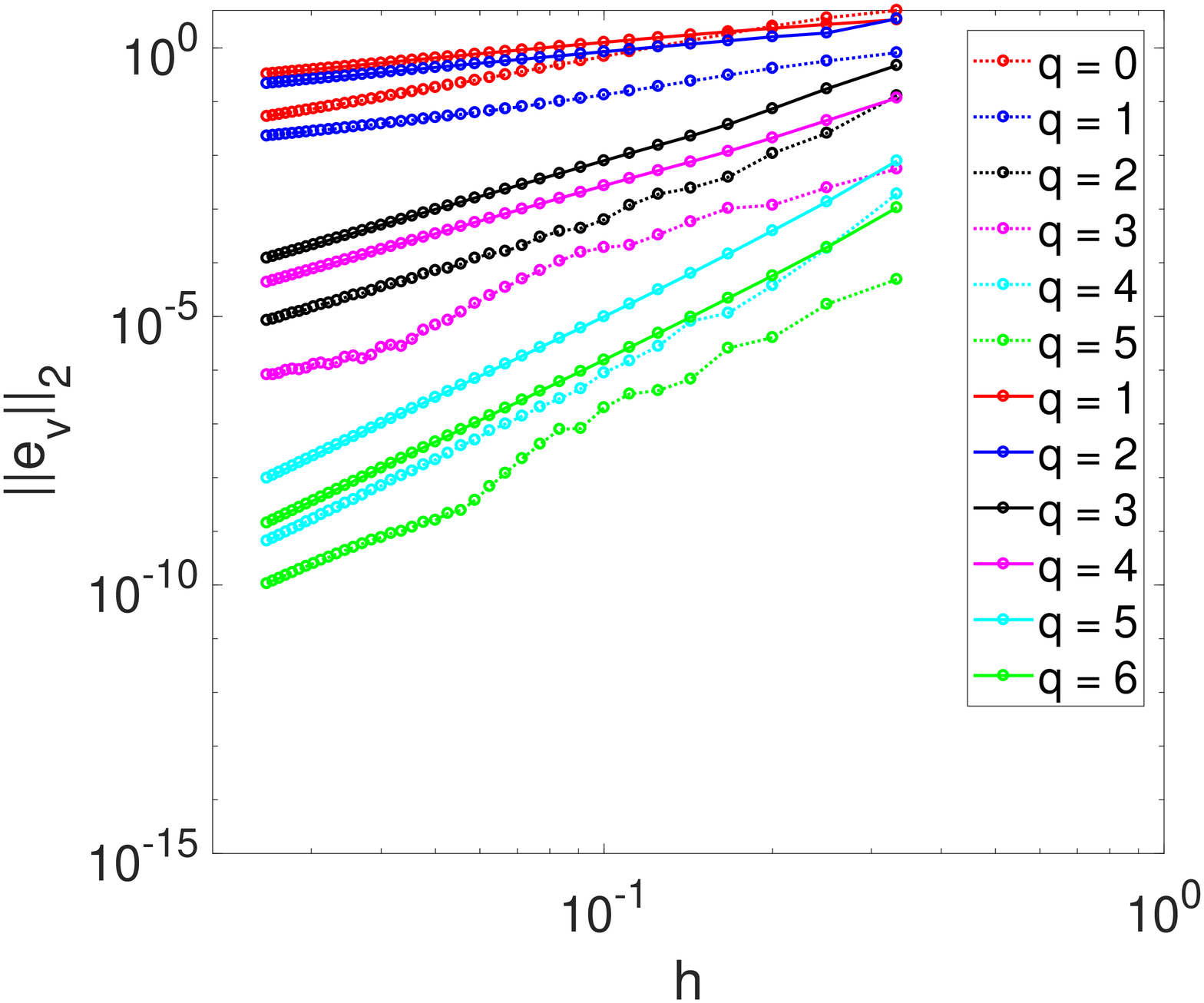}}\\
\subfloat[$w = 1, c = 0.5$]{\label{fig:c1c}\includegraphics[width=0.5\textwidth]{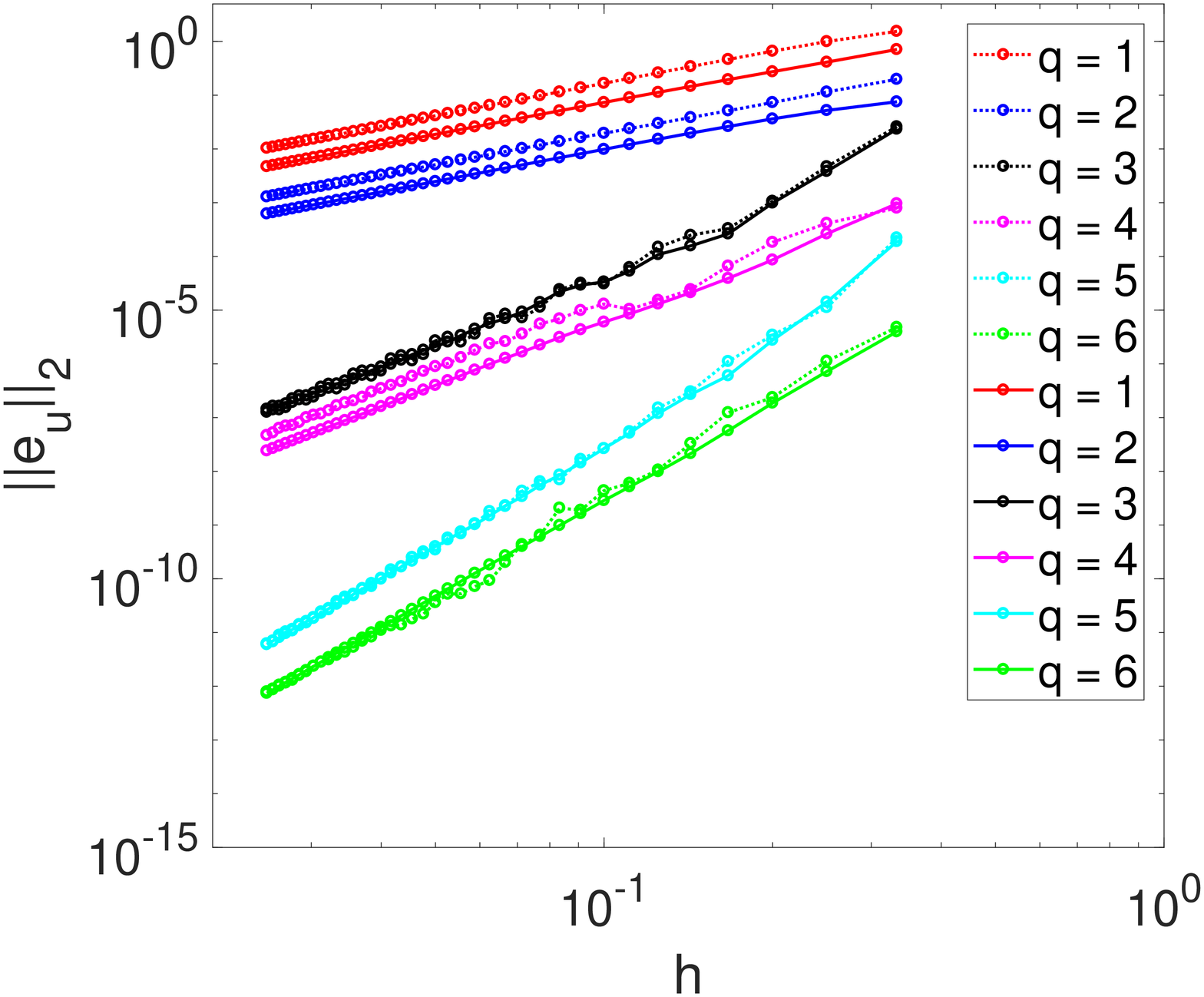}}
\subfloat[$w = 1, c = 0.5$]{\label{fig:vc1c}\includegraphics[width=0.5\textwidth]{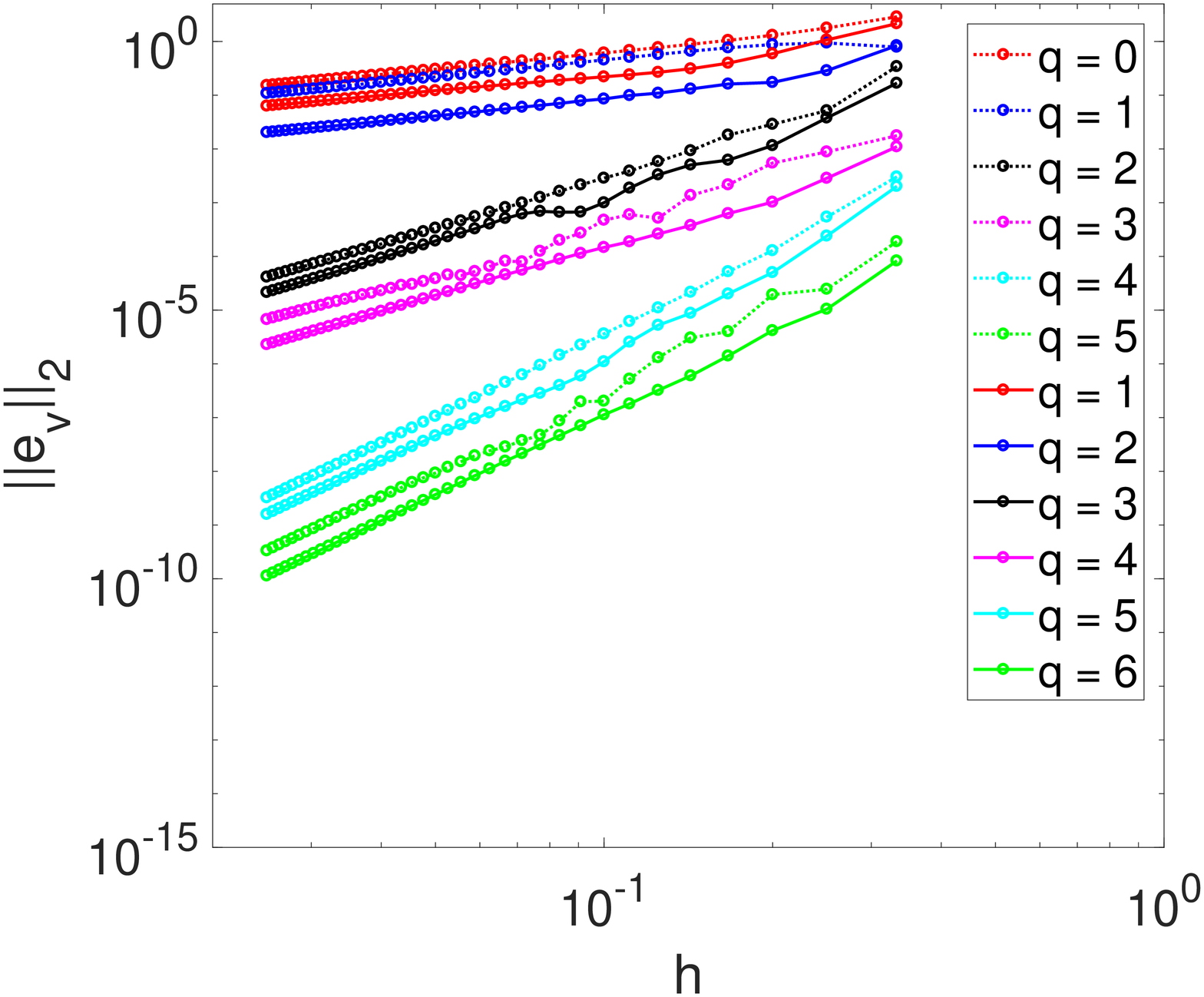}}\\
\subfloat[$w = c = 0.5$]{\label{fig:b1c}\includegraphics[width=0.5\textwidth]{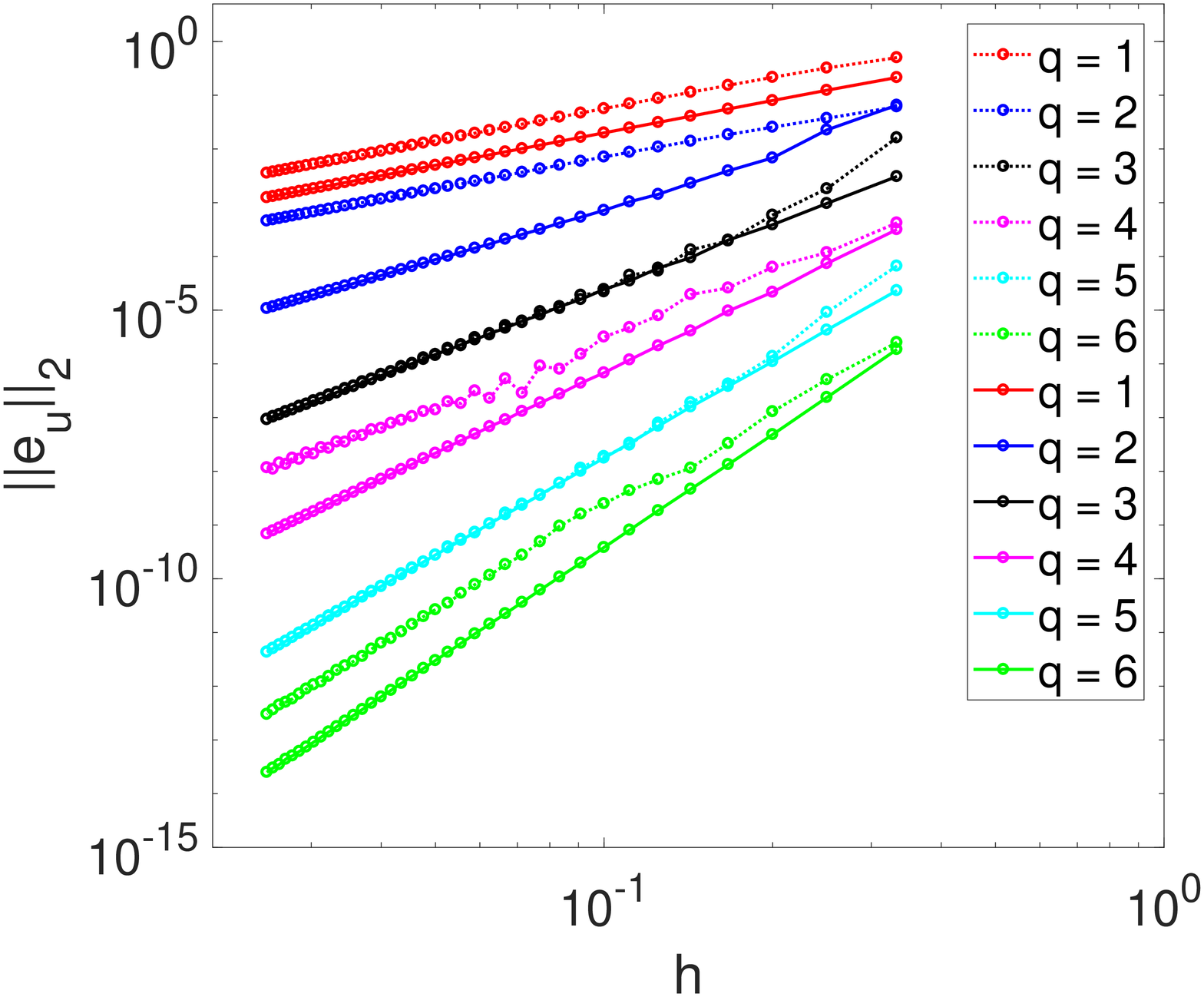}}
\subfloat[$w = c = 0.5$]{\label{fig:vb1c}\includegraphics[width=0.5\textwidth]{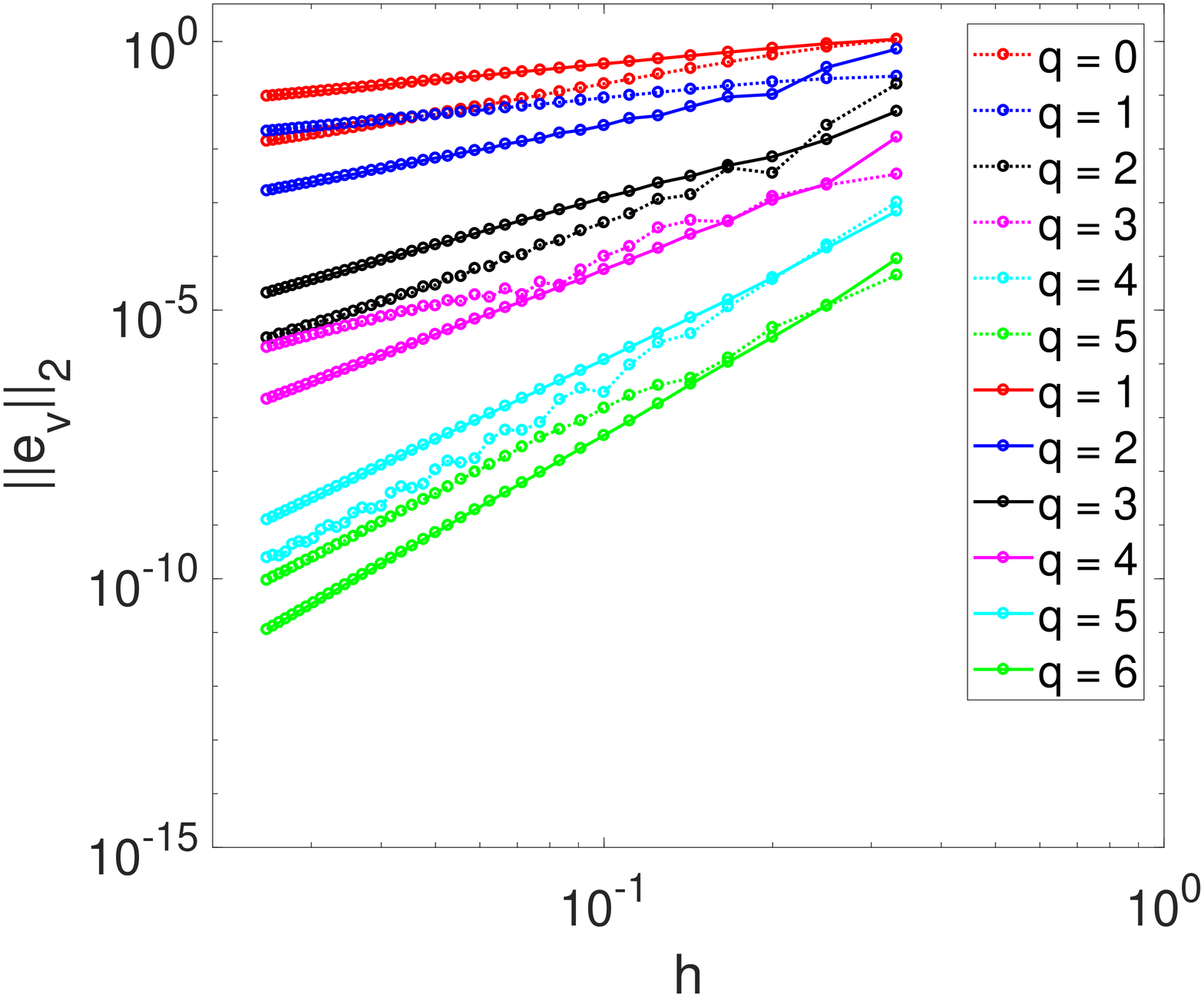}}
\caption{Plots of the error in $u$ (left column) and $v$
(right column) as a function of $h$ in 1d with central flux for periodic boundary condition. In the legend, $q$ is the maximum degree of the approximation of $u$, solid lines represent the case of $u^h$ and $v^h$ in the same space, dotted lines represent the case of $v^h$ one degree lower than $u^h$.\label{fig:uv_center_1d}}
\end{center}
\end{figure}

In Figure \ref{fig:uv_center_1d} the $L^2$ errors in $u$ and $v$ are plotted against the grid-spacing $h$ for the central flux. 
Linear regression estimates of the convergence rate can be found in Table \ref{R_center_per_1d_s} for $u^h$ and $v^h$ in the same approximation space and in Table \ref{R_center_per_1d_d} for $u^h$ and $v^h$ in different spaces.

Excluding the special case $\arrowvert w \arrowvert=c$, we observe for $q$ odd, optimal convergence, $q+1$, for $u$ while the rate of convergence for $v$ is one order lower than $u$. When $u$ and $v$ are in the same space this is suboptimal for $v$. For even $q$ the rate of convergence is only $q$ for $u$. The convergence rate for $v$ is always one
less than for $u$. 
\begin{table}[htb]
\caption{Linear regression estimates of the convergence rate for $u$ and $v$ in $1d$ with central flux for periodic boundary condition, the approximation for $v$ is one degree lower than $u$.\label{R_center_per_1d_d}}
\begin{center}
  \begin{tabular}{|l|c c c c c c|}
  \hline
 Degree $(q)$ of approx. of $u$   & 1 & 2 & 3 & 4 & 5 & 6 \\
\hline
  Rate fit $u$ $(w = 0.5, c = 0.5)$ & 2.00 & 1.99 & 4.05& 3.71 & 6.01 & 6.27 \\
  Rate fit $v$ $(w = 0.5, c = 0.5)$ & 1.61 & 1.01 & 3.24 & 2.82 & 5.40 & 5.27 \\
\hline
 Rate fit $u$ $(w = 0.5, c = 1) $ &2.00& 2.00 & 4.03 & 4.03 & 5.99 & 5.91 \\
 Rate fit $v$ $(w = 0.5, c = 1) $ &1.72 & 1.09 & 3.03 & 2.05 & 5.06 & 4.48  \\
\hline
  Rate fit $u$ $(w = 1, c = 0.5) $ &2.00& 1.99 & 4.13 & 4.11 & 5.81 & 5.60 \\
  Rate fit $v$ $(w = 1, c = 0.5) $ &1.00 & 1.01 & 3.01 & 2.74 & 5.02 & 4.95  \\
\hline
\end{tabular}
\end{center}
\end{table}

\begin{table}[htb]
\caption{Linear regression estimates of the convergence rate of $u$ and $v$ in $1d$ with central flux for periodic boundary condition, $u$ and $v$ are in the same approximation space.\label{R_center_per_1d_s}}
\begin{center}
  \begin{tabular}{|l|c c c c c c|}
  \hline
Degree $(q)$ of approx. of $u$   & 1 & 2 & 3 & 4 & 5 & 6 \\
\hline
Rate fit $u$ $(w = 0.5, c = 0.5)$ &2.00 & 3.01 & 3.99& 4.99 & 6.00 & 6.73 \\
Rate fit $v$ $(w = 0.5, c = 0.5)$ &1.00 & 2.01 & 2.99 & 3.97 & 4.99 & 6.01 \\
 \hline
 Rate fit $u$ $(w = 0.5, c = 1 )$ & 1.99 & 2.00 & 4.03 & 4.01 & 6.02 &6.01\\
 Rate fit $v$ $(w = 0.5, c = 1 )$ & 0.99 & 1.00 & 3.01 & 3.00 & 5.00 & 5.01  \\
 \hline
  Rate fit $u$ $(w = 1, c = 0.5) $ & 2.00& 2.00 & 4.01 & 4.03 & 6.04 & 6.01 \\
  Rate fit $v$ $(w = 1, c = 0.5) $ & 0.97 & 1.01 & 3.12 &  3.03&  4.91& 5.02  \\
 \hline
\end{tabular}
\end{center}
\end{table}

\subsection{Periodic boundary conditions in two space dimensions}
We now test our method on the problem
\bd
(\frac{\partial }{\partial t} + \textbf{w}\cdot\nabla)^{2}u = c^{2}\Delta u, \ \ (x,y)\in(0,1)\times(0,1), \ \ t > 0,
\ed
with periodic boundary conditions $u(0,y,t) = u(1,y,t)$, $u(x,0,t) = u(x,1,t)$ for $t\geq 0$. We approximate the exact solution
\bd
u(x,y,t) = \sin(2c\pi t)\Big(\sin\big(2\pi(x-w_{x}t)\big) +\sin\big(2\pi(y-w_{y}t)\big)\Big), \ \ t \geq 0.
\ed

\begin{table}
\caption{Linear regression estimates of the convergence rate of $u$ and $v$ in $2d$ with upwind flux for periodic boundary condition and $q_{x} = q_{y} = q$.
\label{R_O_per_LR2}}
\begin{center}
  \begin{tabular}{|l|c c c c c c|}
  \hline
Degree $(q)$ of approx. of $u$ and $v$  & 1 & 2 & 3 & 4 & 5 & 6 \\
\hline
  Rate fit $u$ $ (w_{x}=1,w_{y}=1,c=1)$&1.77 & 3.04 & 3.99& 5.00 & 6.00 & 6.97 \\
  Rate fit $v$ $ (w_{x}=1,w_{y}=1,c=1)$&0.89 & 1.96 & 2.97& 3.98 & 4.98 & 5.99 \\
\hline 
 Rate fit $u$ $ (w_{x}=0.5,w_{y}=1.5,c=1)$&1.05& 2.93 & 4.00 & 4.99 & 5.99 & 6.96 \\
 Rate fit $v$ $ (w_{x}=0.5,w_{y}=1.5,c=1)$&0.90& 1.91 & 2.99 & 3.97 & 4.98 & 5.99 \\
\hline  
   Rate fit $u$  $(w_{x}=0.5,w_{y}=0.5,c=1)$&1.07& 2.95 & 4.02 & 4.98 & 5.99 & 7.00 \\
   Rate fit $u$  $(w_{x}=0.5,w_{y}=0.5,c=1)$&0.89& 1.92 & 2.97 & 3.97 & 4.98 & 5.98 \\
\hline 
\end{tabular}
\end{center}
\end{table}

The discretization is performed with elements whose vertices are on the Cartesian grid defined by $x_{i} = ih $, $y_{j} = jh $, \mbox{$i,j = 0,1,\ldots,n$} with $h = 1/n$. Here we restrict attention to the case where $u^{h}$ and $v^h$ are in the same space. We evolve the solution until $T = 0.2$ using the classic fourth order Runge-Kutta method and with the time step size $\Delta t = {\rm CFL} h $.

In the numerical experiments we test both the central flux and the upwind flux. We have ${\rm CFL} = 0.075/(2\pi)$ for the central flux and ${\rm CFL = 0.0375/(2\pi)}$ for the upwind flux. Note that at an interface with supersonic normal flow the upwind flux is one-sided. Also, we only display graphs of the error in $u$, but tabulate the convergence rates for both variables. 

\begin{figure}[htb]
\begin{center}
\subfloat[$w_{x} = w_{y} = c = 1$]{\label{fig:e}\includegraphics[width=0.5\textwidth]{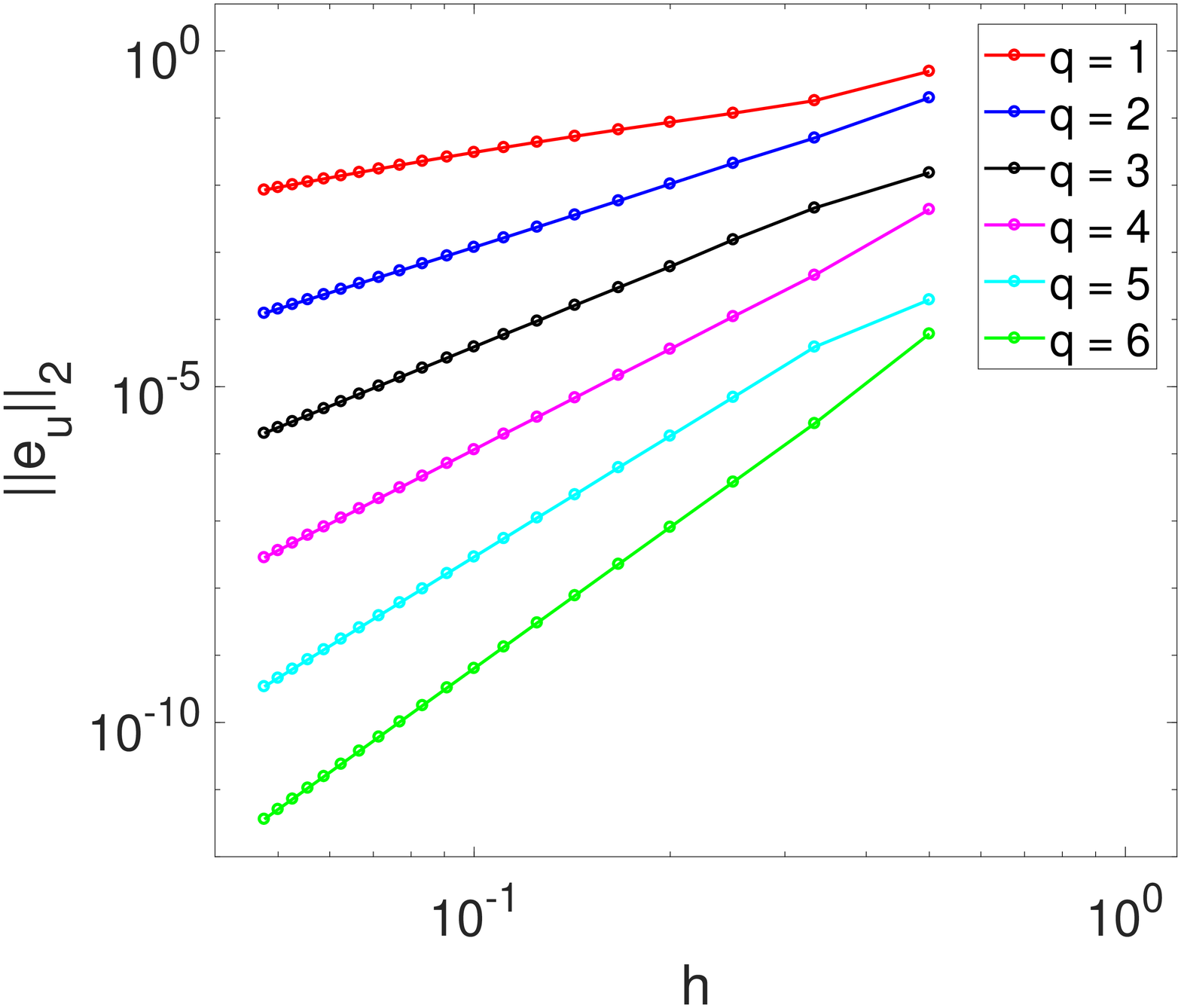}}
\subfloat[$w_{x} = 0.5, w_{y} = 1.5, c = 1$]{\label{fig:f}\includegraphics[width=0.5\textwidth]{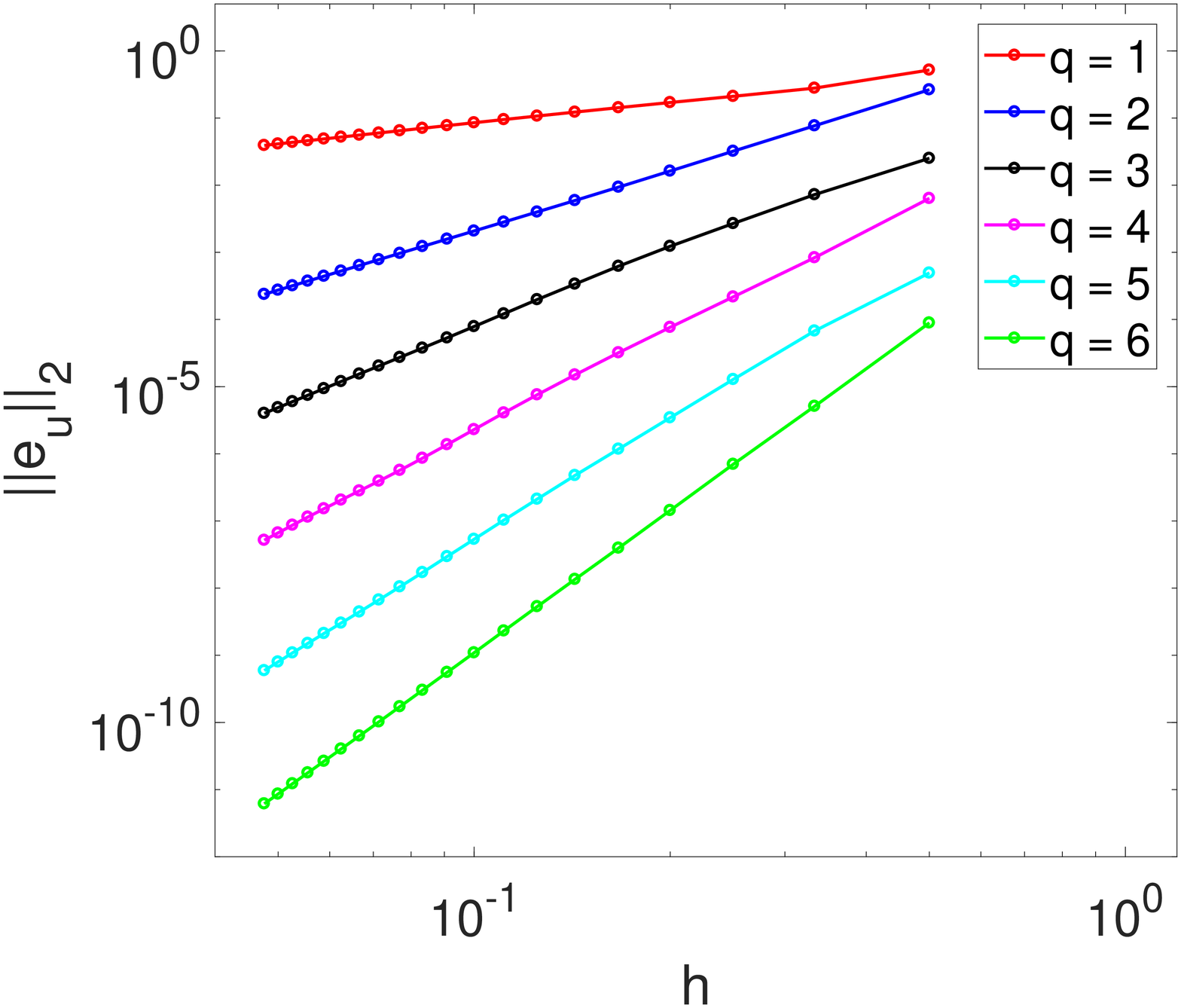}}\\
\subfloat[$w_{x} = w_{y} = 0.5, c = 1$]{\label{fig:g}\includegraphics[width=0.5\textwidth]{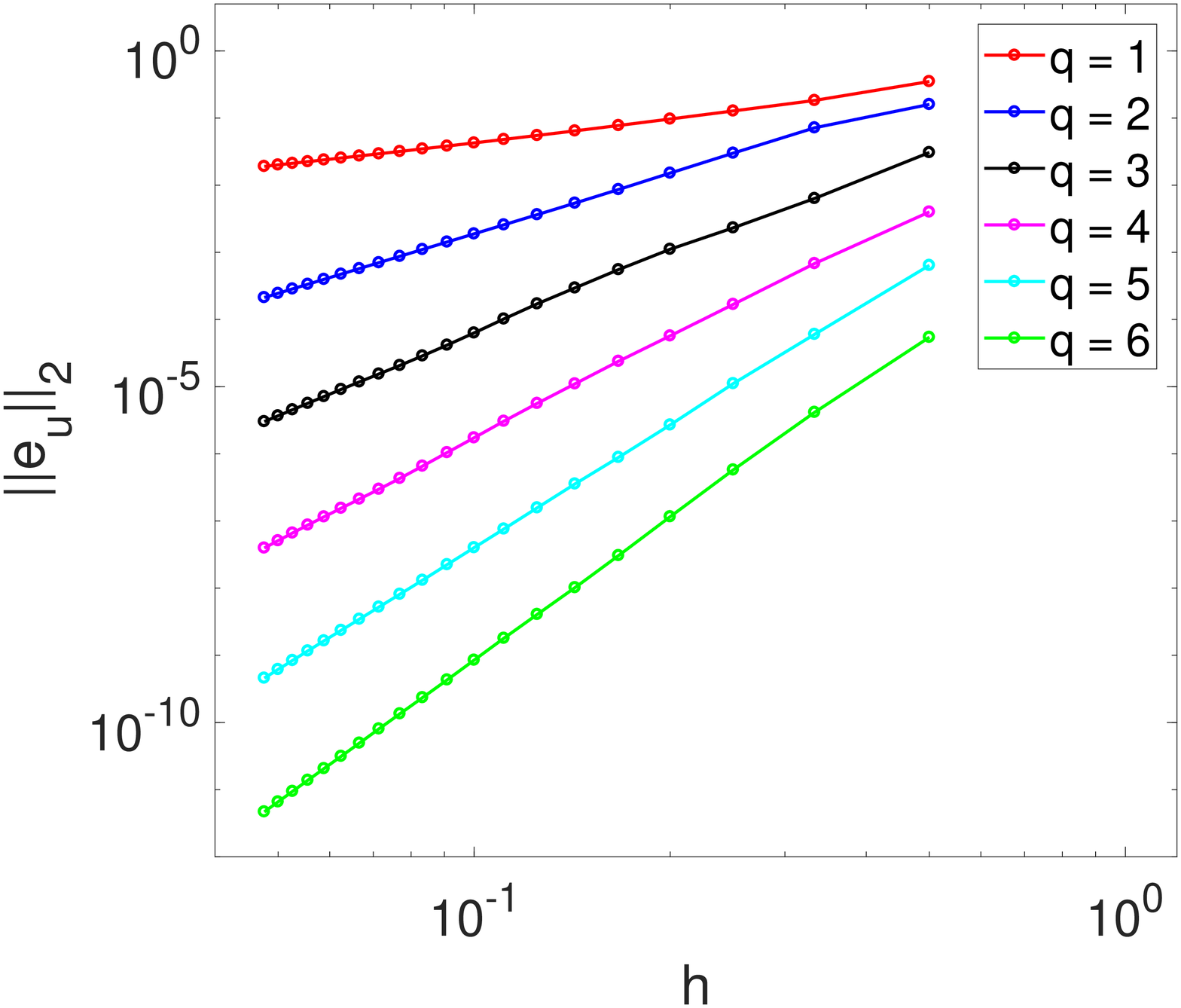}}
\end{center}
\caption{Plots of the error in $u$ as a function of $h$ in 2d with the upwind flux and periodic boundary conditions. In the legend, $q$ is the degree of the approximation of $u$ and $v$ for both $x$ and $y$ directions.\label{fig:u_ou_per}}
\end{figure}

The errors for $u$ obtained with the upwind flux are plotted against the grid-spacing $h$ in Figure \ref{fig:u_ou_per}. Linear regression estimates of the rate of convergence can be found in Table \ref{R_O_per_LR2}. We observe convergence at the optimal rate, $q+1$, for $u$ and a convergence rate of $q$ for $v$ if $q \geq 2$.

\begin{figure}[tbhp]
\begin{center}
\subfloat[$w_{x}=w_{y}=0.5, c=1$]{\label{fig:a}\includegraphics[width=0.5\textwidth]{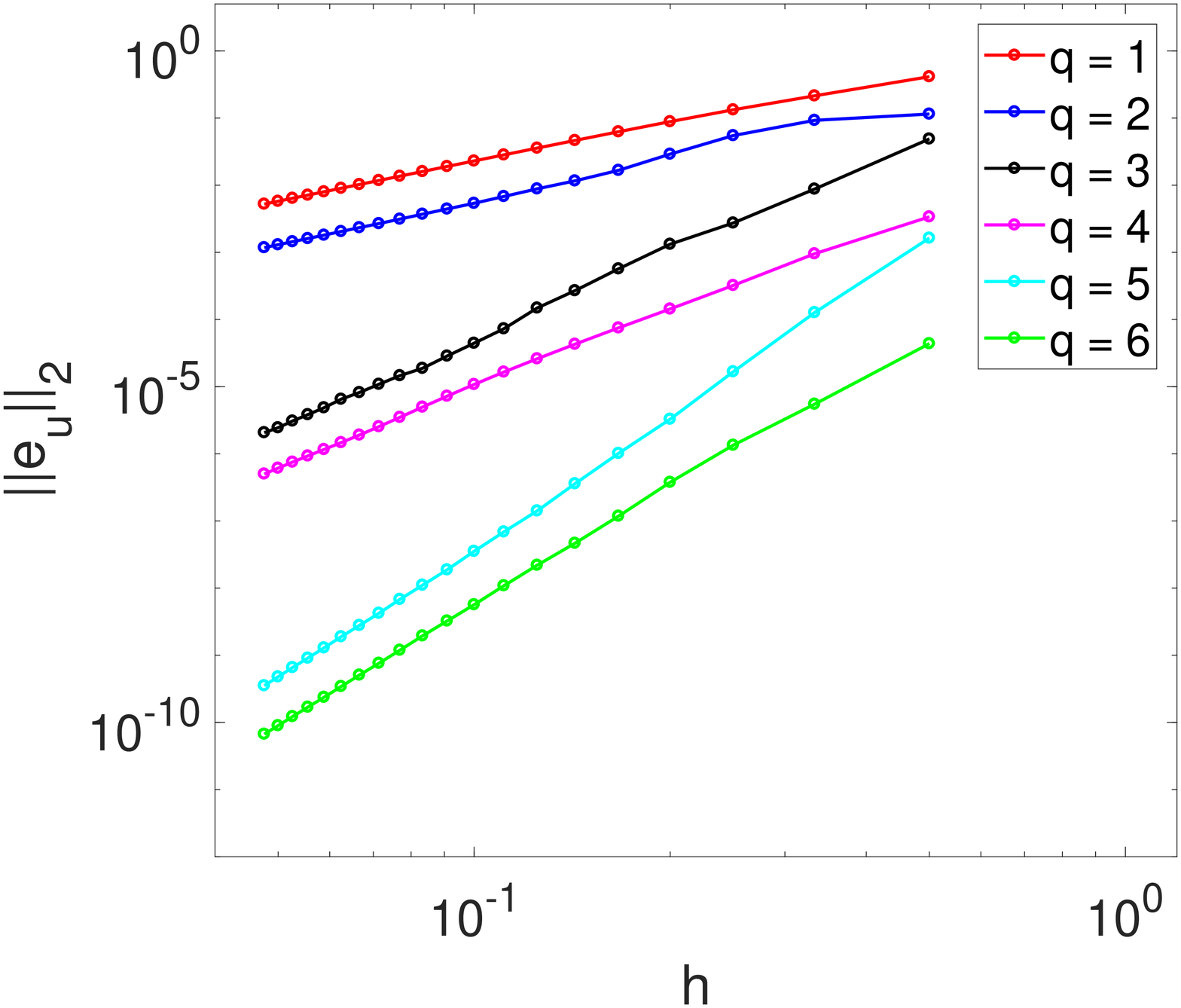}}
\subfloat[$w_{x}=w_{y}=c=1$]{\label{fig:b}\includegraphics[width=0.5\textwidth]{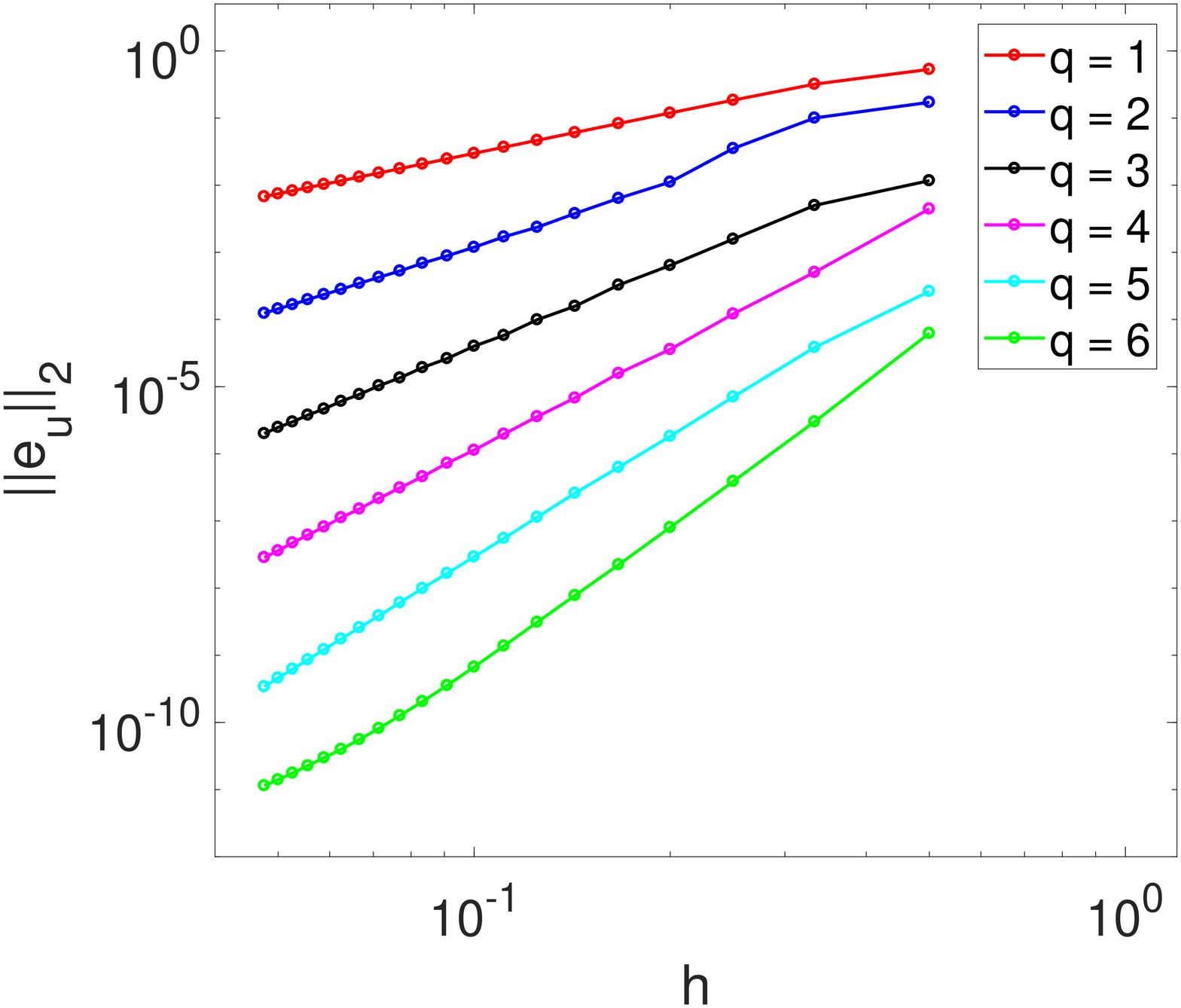}}\\
\subfloat[$w_{x}=0.5,w_{y}=1.5,c=1$]{\label{fig:c}\includegraphics[width=0.5\textwidth]{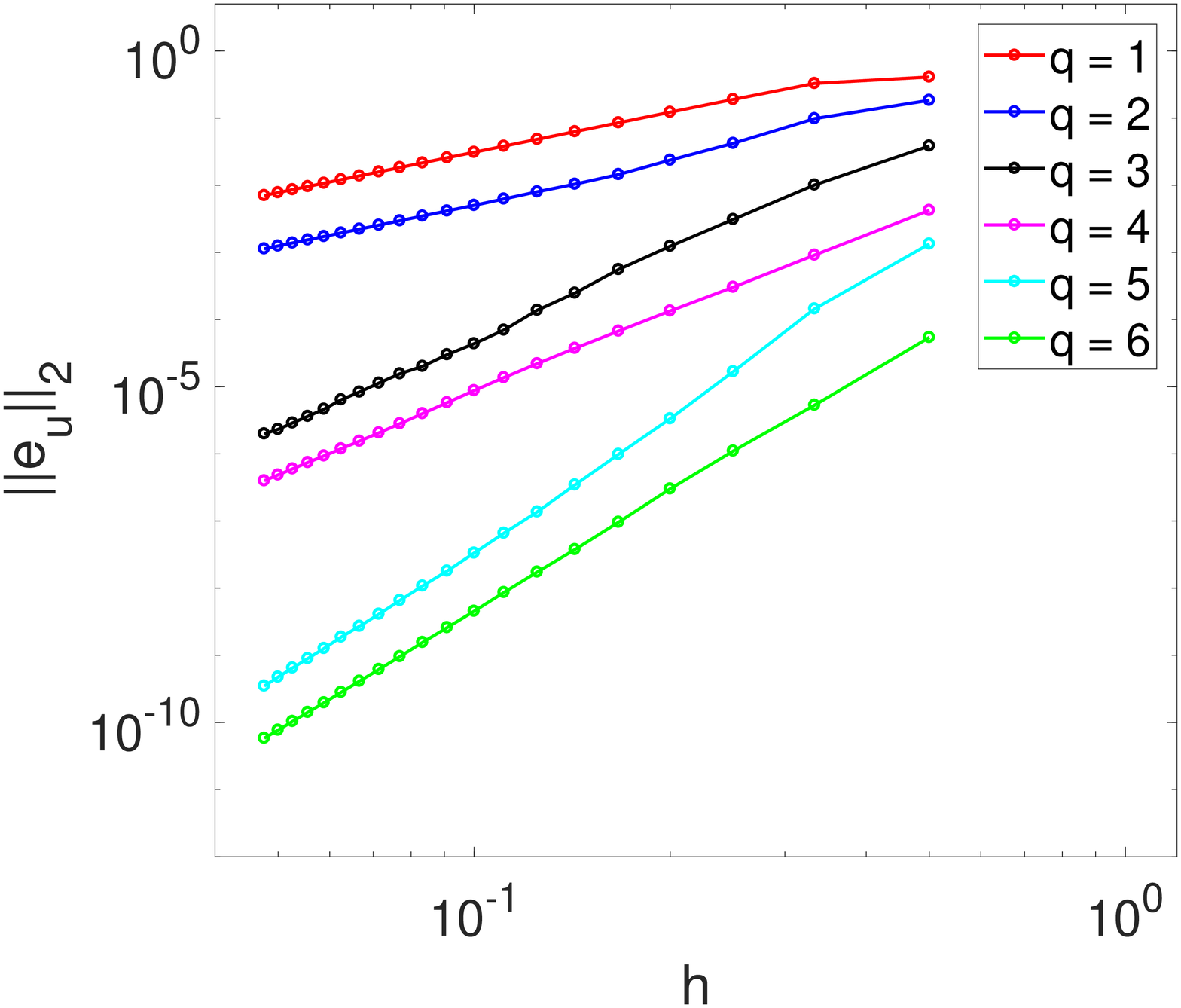}}
\subfloat[$w_{x}=w_{y}=1.5,c=1$]{\label{fig:d}\includegraphics[width=0.5\textwidth]{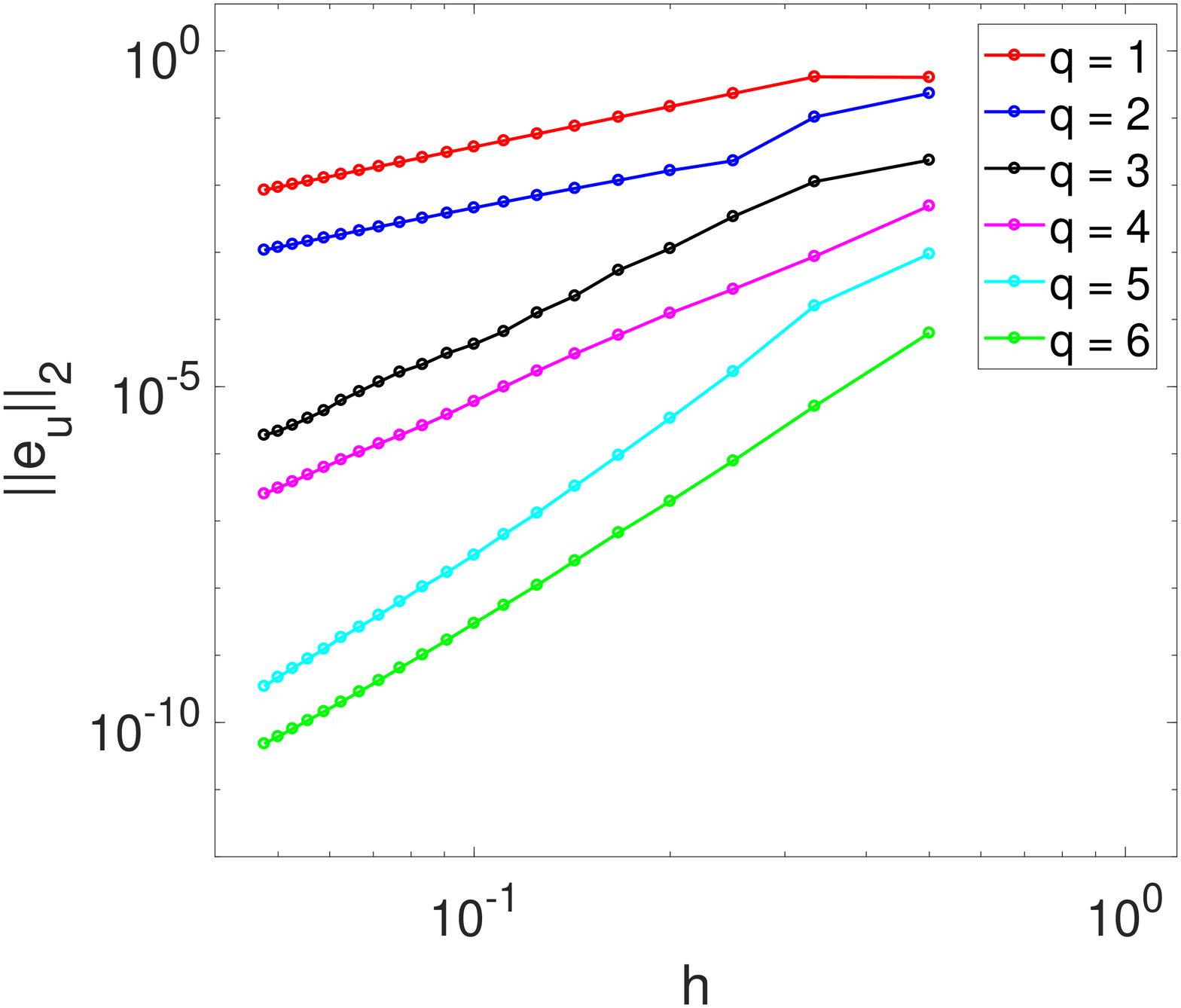}}
\caption{Plots of the error in $u$ as a function of $h$ in 2d with the central flux and periodic boundary conditions. In the legend, $q$ is the degree of approximation to $u$ and $v$ for both $x$ and $y$ directions.\label{fig:u_center_per}}
\end{center}
\end{figure}

\begin{table}
\caption{Linear regression estimates of the convergence rate of $u$ and $v$ in $2d$ with central flux for periodic boundary condition and $q_{x} = q_{y} = q$. \label{R_center_per_LR}}
\begin{center}
  \begin{tabular}{|l|c c c c c c|}
  \hline
 Degree $(q)$ of approx. of $u$ and $v$   & 1 & 2 & 3 & 4 & 5 & 6 \\
\hline
  Rate fit $u$ $(w_{x}=0.5,w_{y}=0.5,c=1)$ & 2.00 & 2.04 & 4.04& 4.06 & 6.15 & 6.01 \\
Rate fit $v$ $ (w_{x}=0.5,w_{y}=0.5,c=1)$ & 0.96 & 0.99 & 3.08& 2.97 & 5.15 & 4.99 \\
\hline
Rate fit $u$ $ (w_{x}=1,w_{y}=1,c=1)$ &2.00& 3.05 & 4.01 & 4.97 & 6.01 & 5.13 \\
Rate fit $v$ $ (w_{x}=1,w_{y}=1,c=1)$ &1.00& 2.05 & 2.95 & 3.99 & 4.96 & 6.01 \\
\hline
Rate fit $u$ $ (w_{x}=0.5,c=1,w_{y}=1.5)$&2.00 & 2.01 & 4.30& 4.09 & 6.11 & 5.86 \\
Rate fit $v$ $ (w_{x}=0.5,c=1,w_{y}=1.5)$&0.97 & 0.99 & 3.09& 2.98 & 5.07 & 4.99 \\
\hline
Rate fit $u$ $ (w_{x}=1.5,w_{y}=1.5,c=1)$ &2.00& 1.96 & 4.56 & 4.20 & 6.06 & 5.45 \\
Rate fit $v$ $ (w_{x}=1.5,w_{y}=1.5,c=1)$ &1.79& 1.02 & 3.37 & 3.23 & 4.55 & 4.97 \\
\hline
\end{tabular}
\end{center}
\end{table}

The $L^2$ error for $u$ for the central flux is plotted against the grid-spacing $h$ in Figure \ref{fig:u_center_per}. Linear regression estimates of the rate of convergence can be found in Table \ref{R_center_per_LR} for both $u$ and $v$. Similar to the one-dimensional case, convergence is optimal for $u$ when $q$ is odd and suboptimal by one when $q$ is even except in the special case of sonic boundaries.

\subsection{Dirichlet and radiation boundary conditions in two space dimensions}
Lastly we consider a problem with a Dirichlet boundary condition on inflow boundaries (left and bottom) and radiation boundary condition on outflow boundaries (right and top). Since we don't have a simple exact solution satisfying these boundary conditions, we set
\bd
u(x,y,t) = x(1-x)^{2}y(1-y)^{2}\exp(x+y)\sin(t),
\ed
and solve
\bd 
    (\frac{\partial}{\partial t} + \textbf{w}\cdot\nabla)^{2} u = c^{2}\Delta u + f, \ \ (x, y) \in (0, 1)\times(0, 1),  \ \ t > 0,
\ed
with $f$ determined by $u$. Note that for this specific choice we have that $u(x,y,t) = 0$ on the inflow boundaries and $u(x,y,t) = u_{x}(x,y,t) = u_{y}(x,y,t) = 0$ on the outflow boundaries. In the following numerical experiments we choose the same approximation spaces for $u^{h}$ and $v^{h}$, polynomial degrees $q_{x} = q_{y} = q$. We evolve the solution to $T = 0.2$ with the step size $\Delta t = {\rm CFL}h $ and ${\rm CFL} = 0.075/(2\pi)$. Here we only consider the subsonic case, $w_x=w_y=0.5$ with $c=1$, and compare both upwind and central fluxes. 

\begin{figure}[htb]
\begin{center}
\subfloat[$w_{x} = w_{y} = 0.5,\, c =1$] {\label{fig:gd}\includegraphics[width=0.5\textwidth]{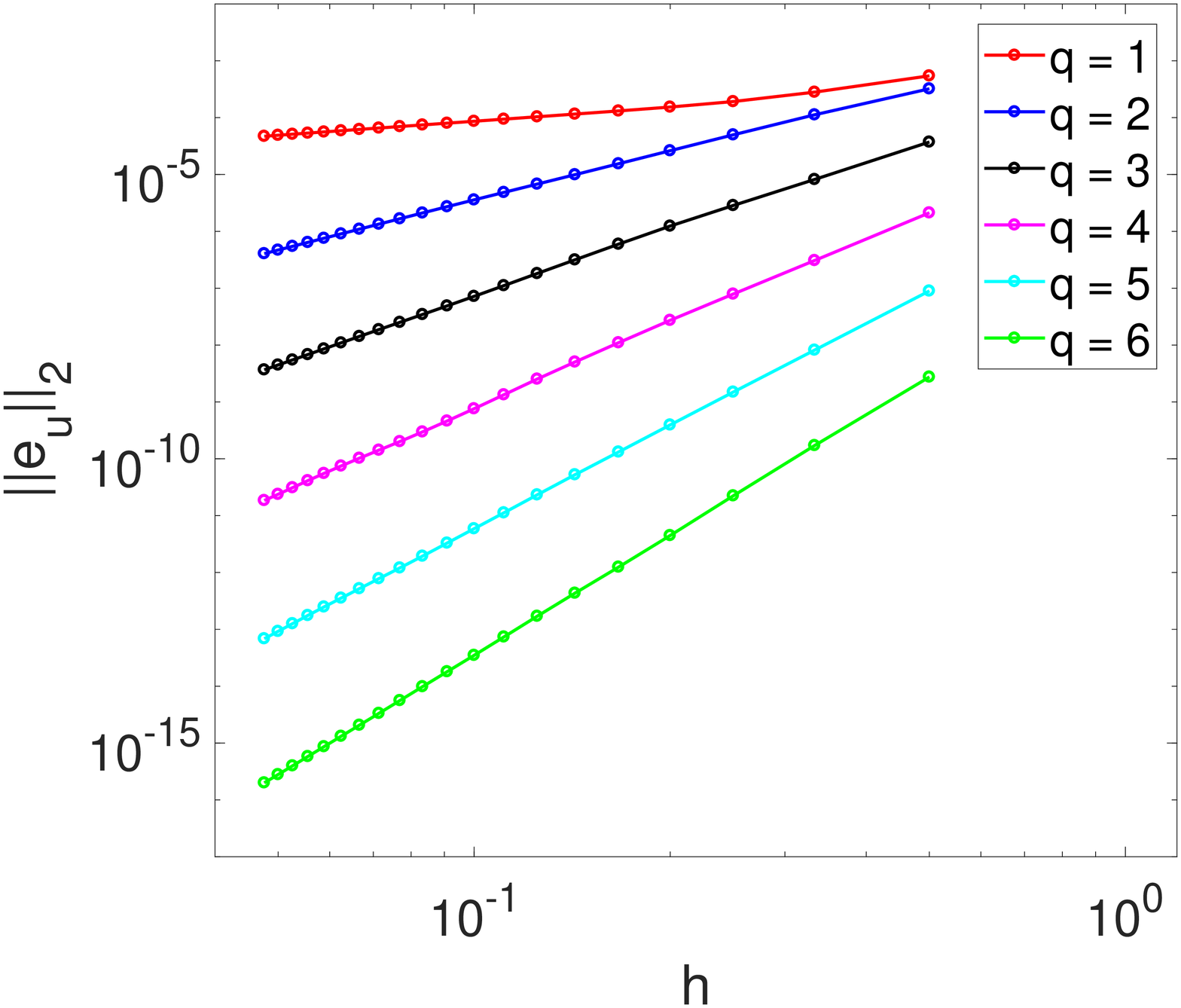}}
\subfloat[$w_{x} = w_{y} = 0.5,\, c = 1$]{\label{fig:ad}\includegraphics[width=0.5\textwidth]{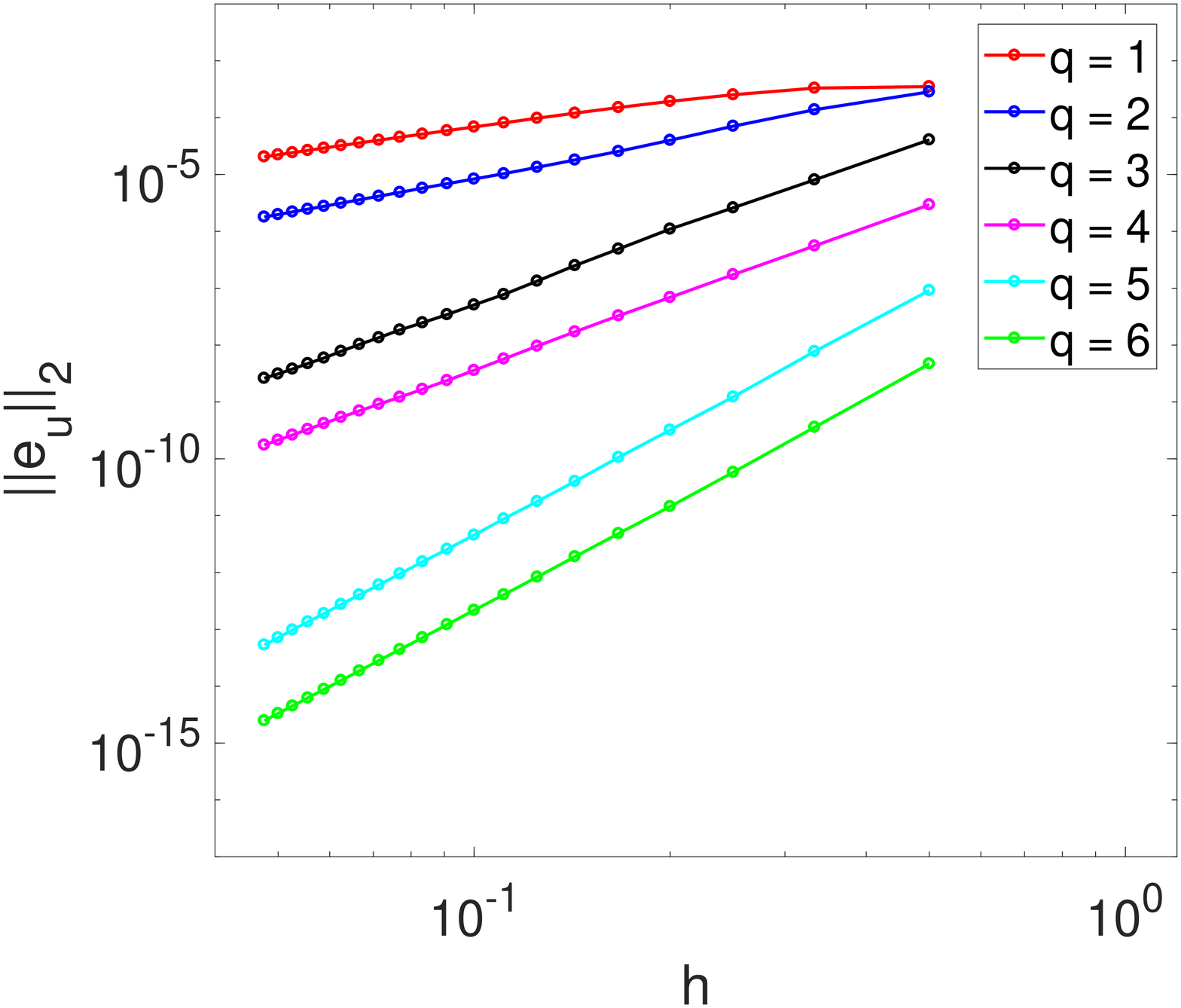}}
\end{center}
\caption{Plots of the error in $u$ as a function of $h$ in 2d with upwind (left) and central (right) fluxes for Dirichlet boundary condition on inflow boundaries and a radiation boundary condition on outflow boundaries. In the legend, $q$ is the degree of the approximation to $u$ and $v$ for both $x$ and $y$ directions.\label{fig:u_dbc}}
\end{figure}

\begin{table}
\caption{Linear regression estimates of the convergence rate for $u$ and $v$ in $2d$ with Dirichlet boundary condition on inflow boundaries, radiation boundary condition on outflow boundaries and $q_{x} = q_{y} = q$. Here the first two rows correspond to the upwind flux and the last two to the central flux. \label{DRBCtab}}
\begin{center}
  \begin{tabular}{|l|c c c c c c|}
  \hline
Degree $(q)$ of approx. of $u$  & 1 & 2 & 3 & 4 & 5 & 6 \\
\hline
Rate fit $u$ $ (w_{x}=0.5,w_{y}=0.5,c=1)$&0.82& 2.94 & 4.01 & 4.96 & 5.97 & 6.96 \\
Rate fit $v$ $ (w_{x}=0.5,w_{y}=0.5,c=1)$&0.78& 1.87 & 2.92 & 3.92 & 4.95 & 5.97 \\
\hline
Rate fit $u$ $ (w_{x}=0.5,w_{y}=0.5,c=1)$ &1.65 & 2.09 & 4.09& 4.04 & 6.01 & 6.01 \\
Rate fit $v$ $ (w_{x}=0.5,w_{y}=0.5,c=1)$ &0.93 & 0.98 & 2.98& 3.00 & 5.01 & 5.00 \\
\hline
\end{tabular}
\end{center}
\end{table}

The error for $u$ is plotted against the grid-spacing $h$ for both fluxes in Figure \ref{fig:u_dbc}. Linear regression estimates of the rate of convergence can be found in Table \ref{DRBCtab}. The rates of convergence are very close to those for the periodic problem.

\section{Conclusion and extension}
In conclusion, we have generalized the energy-based discontinuous Galerkin method of \ci{DGwave} to the wave
equation with advection, a problem for which the energy density takes a more complicated form than a
simple sum of a term involving the time derivative and a term involving space derivatives. We have
shown that the new form can be handled by introducing a second variable which, unlike what was done
in \ci{DGwave,el_dg_dath}, involves both space and time derivatives. We prove error estimates
completely analogous with those shown in \ci{DGwave} for the isotropic wave equation, including cases with
both subsonic and supersonic background flows. Numerical experiments also demonstrate optimal convergence
on regular grids when an upwind flux is used.

A potential application of the method would be to linearized models in aeroacoustics, where its generalization
to inhomogeneous media such as those defined by background shear flows would be needed
(e.g. \ci{Goldstein03}). Here we expect that
the use of upwind fluxes would guarantee stability for the discretization of the principal part which should
be sufficient to establish convergence. Secondly, we will understand our construction in the context of
regularly hyperbolic systems as defined in \ci[Ch. 5]{CAction} with the hope of treating the general case.

\bibliography{hag}
\bibliographystyle{siam}

\end{document}